\newcolumntype{L}{>{\displaystyle}l}
\newcolumntype{C}{>{\displaystyle}c}
\newcolumntype{R}{>{\displaystyle}r}
\newcommand{\R}{\ensuremath{\mathbb{R}}}
\newcommand{\Z}{\ensuremath{\mathbb{Z}}}
\newcommand{\CF}{\ensuremath{\mathcal{F}}}
\newcommand{\CO}{\ensuremath{\mathcal{O}}}
\newcommand{\ov}{\overline}
\newcommand{\T}{\theta}
\newcommand{\f}{\varphi}
\newcommand{\al}{\alpha}
\newcommand{\s}{\ensuremath{\mathbb{S}}}
\newcommand{\A}{\ensuremath{\mathcal{A}}}
\newcommand{\X}{\ensuremath{\mathcal{X}}}
\def\p{\partial}
\def\e{\varepsilon}
\newtheorem {theorem} {Theorem}
\newtheorem {proposition} [theorem] {Proposition}
\newtheorem {lemma} [theorem] {Lemma}
\newtheorem {claim} {Claim}
\begin{document}

\title[Averaging theory at any order for piecewise differential systems]
{Averaging theory at any order for computing limit cycles of discontinuous piecewise differential systems with many zones}

\author[J. Llibre, D.D. Novaes and C.A.B. Rodrigues]
{Jaume Llibre, Douglas D. Novaes  and Camila A. B. Rodrigues}

%
%
%


%

\let\thefootnote\relax\footnotetext{\\ Jaume Llibre\\
Departament de Matematiques,
Universitat Aut\`{o}noma de Barcelona, 08193 Bellaterra,\\
Barcelona, Catalonia, Spain\\
Email: jllibre@mat.uab.cat}

\let\thefootnote\relax\footnotetext{\\ Douglas D. Novaes\\
Departamento de Matem\'{a}tica, Universidade Estadual de Campinas,\\
Rua S\'{e}rgio Baruque de Holanda, 651, Cidade Universit\'{a}ria Zeferino Vaz, 13083-859,\\
Campinas, S\~{a}o Paulo, Brazil,\\
Email: ddnovaes@ime.unicamp.br}

\let\thefootnote\relax\footnotetext{\\ Camila A. B. Rodrigues\\
Departamento de Matem\'{a}tica, ICMC-Universidade de S\~{a}o Paulo, 13560-970,\\
S\~{a}o Carlos, S\~{a}o Paulo, Brazil,\\
Email: camilaap@icmc.usp.br}
\maketitle

\noindent{\bf Abstract}
This work is devoted to study the existence of periodic solutions for a family of planar discontinuous differential systems $Z(x,y;\e)$ with many zones. We show that for $|\e|\neq0$ sufficiently small the averaged functions at any order control the existence of crossing limit cycles for systems in this family. We also provide some examples dealing with nonlinear centers when $\e=0$.

\smallskip

\noindent{\bf Keywords} periodic solution $\cdot$ averaging method $\cdot$ nonsmooth
differential system $\cdot$ discontinuous
differential system

\smallskip

\noindent {\bf Mathematics Subject Classi cation (2000)} 34C29  $\cdot$  34C25  $\cdot$  37G15 $\cdot$  34C07

\section{Introduction and statement of the main results}

In the qualitative theory of real planar differential system the determination
of limit cycles, defined by Poincar\'{e} \cite{poincare}, has become one
of the main problems. The second part of the $16$th Hilbert problem deals with planar polynomial vector fields and proposes to find a uniform upper  bound $H(n)$ (called Hilbert's number) for the number of limit cycles
that these vector fields can have depending only on the polynomial degree $n$.
The averaging method has been used to provide lower bounds for the Hilbert number $H(n)$ see, for instance, \cite{LMT}. The interest on this topic extends to what we call  {\it discontinuous piecewise vector fields}.

\smallskip

The increasing interest in the theory of nonsmooth vector fields has been mainly motivated by its strong relation with Physics, Engineering, Biology, Economy, and other branches of science. In fact, their associated differential systems are very useful to model phenomena presenting abrupt switches such as electronic relays, mechanical impact, and neuronal networks, see for instance \cite{BBCK, Co, physDspecial}. The extension of the averaging theory to discontinuous piecewise vector field has been the central subject of investigation of the following works  \cite{LliItiNov2015,LliMerNovJDE2015,LliNovPreprint2014,LliNovTeiBSM2015}.

\smallskip

A piecewise vector field defined on an open bounded set $U\subset\R^n$ is a function $F:U\rightarrow\R^n$ which is continuous except on a set $\Sigma$ of measure $0$, called the {\it set of discontinuity} of the vector field $F$. It is assumed that $U\setminus\Sigma$ is a finite collection of disjoint open sets $U_i,$ $i=1,2,\ldots,m,$ such that the restriction $F_i=F\big |_{U_i}$ is continuous and extendable to the compact set $\ov{U_i}$. The local trajectory of $F$ at a point $p\in U_i$  is given by the usual notion. However the local trajectory of $F$ at a point $p\in\Sigma$ needs to be given with some care. In \cite{Filippov}, taking advantage of the theory of differential inclusion (see \cite{AC}), Filippov established some conventions for what would be a local trajectory at points of discontinuity where the set $\Sigma$ is locally a codimension one embedded submanifold of $\R^n.$ For a such point $p \in \Sigma$, we consider a sufficiently small neighborhood $U_p$ of $p$ such that $\Sigma$ splits $U_p\setminus\Sigma$ in two disjoint open sets $U_p^+$ and $U_p^-$ and denote $F^{\pm}(p)=F\big|_{U_p^{\pm}}(p).$ In short, if the vectors $F^{\pm}(p)$ point at the same direction then the local trajectory of $F$ at $p$ is given as the concatenation of the local trajectories of $F^{\pm}$ at $p$. In this case we say that the trajectory {\it crosses} the set of discontinuity and that $p$ is a {\it crossing point}. If the vectors $F^{\pm}(p)$ point in opposite directions then the local trajectory of $F$ at $p$ slides on $\Sigma$. In this case we say that $p$ is a {\it sliding point}. For more details on the Filippov conventions see \cite{Filippov,GST}.

\smallskip

In this paper we are interested in establishing conditions for the existence of crossing limit cycles for a class of planar discontinuous piecewise vector fields, that is limit cycles which only crosses the set of discontinuity $\Sigma$. It is worth to say that if $\Sigma$ is locally described as $h^{-1}(0)$, being $h:U\rightarrow \R$ a smooth function and $0$ a regular value, then $\langle\nabla h(p), F^+(p)\rangle\langle\nabla h(p), F^-(p)\rangle>0$ is the condition in order that $p$ is a crossing point.

\smallskip

In the sequel we introduce a short review of the averaging theory for computing isolated periodic solutions of differential equations. Then we set the class of planar discontinuous piecewise differential equations that we are interested. After that the rest of the section is devoted to the statement of our main result.

\subsection{Background on the averaging theory for smooth systems}

Let $D$ be an open bounded subset of $\R_+$ and consider  $C^{k+1}$ functions  $F_i:\R\times D\rightarrow\R$ for  $i=1,2,\ldots,k$, and $R:\R\times D\times(-\e_0,\e_0)\rightarrow\R$. We assume that all these functions are $2\pi$-periodic in the first variable. Now consider the following differential equation
\begin{equation}\label{smooth}
r'(\T)=\sum_{i=0}^k\e^i F_i(\T,r)+\e^{k+1} R(\T,r,\e),
\end{equation}
and assume that the solution $\f(\T,z)$ of the {\it unperturbed system} $r'(\T)= F_0(\T,r),$ such that $\f(0,\rho)=\rho,$ is $2\pi$-periodic for every $\rho\in D$. Here the prime denotes the derivative in the variable $\T$.

\smallskip

A central question in the study of system \eqref{smooth} is to understand which periodic orbits of the unperturbed system $r'(\T)= F_0(\T,r)$ persists for $|\e|\neq0$ sufficiently small. In others words to provide sufficient conditions for the persistence of isolated periodic solutions. The averaging theory is one of the best tools to track this problem.  Summarizing, it consists in defining a collection of functions $f_i:D\rightarrow\R,$ for $i=1,2,\ldots,k$,  called {\it averaged functions}, such that their simple zeros provide the existence of isolated periodic solutions of the differential equation \eqref{smooth}.  In \cite{LliNovTeiN2014,LliNovTeiN2014c} it was proved that these averaged functions are
\begin{equation}\label{f}
f_i(\rho)=\dfrac{y_i(2\pi,\rho)}{i!},
\end{equation}
where $y_i:\R\times D\rightarrow \R$ for $i=1,2,\ldots,k$, are
defined recurrently by the following integral equations
\begin{equation}
\label{y}
\begin{array}{RL}
y_1(\T,\rho)=&\int_0^{\T} F_1\left(s,\f(s,\rho)\right)ds,\vspace{0.3cm}\\

y_i(\T,\rho)=&i!\int_0^{\T}\Big(F_i\left(s,\f(s,\rho)\right)
+\sum_{l=1}^{i}\sum_{S_l} \dfrac{1}{b_1!\,b_2!2!^{b_2}\cdots
b_l!l!^{b_l}}\vspace{0.2cm}\\
&\cdot\p^L F_{i-l} \left(s,\f(s,\rho)\right)
\prod_{j=1}^ly_j(s,\rho)^{b_j}\Big)ds, \text{ for }\, i=2,\ldots,k.
\end{array}
\end{equation}
Here $\p^L G(\phi,\rho)$ denotes the derivative order $L$ of a function $G$ with respect to the variable $\rho$,  and $S_l$ is the set of all $l$-tuples of non-negative integers
$(b_1,b_2,\ldots,b_l)$ satisfying $b_1+2b_2+\cdots+lb_l=l$, and
$L=b_1+b_2+\cdots+b_l$.

\subsection{A class of planar discontinuous piecewise smooth vector fields}

When one consider the above problem in the world of discontinuous piecewise differential systems  it is not always true that the higher averaged functions \eqref{f} allow to study the persistence of isolated periodic solutions. In \cite{LliNovTeiBSM2015, LliMerNovJDE2015} this problem was considered for general Filippov systems when $F_0(\T,r) \equiv 0$ and it was proved that the averaged function of first order can provide information in the persistence of crossing isolated periodic solutions. Furthermore the authors have found conditions on those systems in order to assure that the averaged function of second order also provides information on the existence of crossing isolated periodic solutions.  When $F_0(\T,r) \not\equiv 0$ but satisfies the condition that the solution $\f(\T,\rho)$ is $2\pi$-periodic the authors in \cite{LliNovPreprint2014} have found conditions on those systems in order to assure that the averaged function of first order provides information on the existence of crossing isolated periodic solutions.

\smallskip

This work is devoted to study the existence of isolated periodic solutions for an $\e$-family of planar discontinuous piecewise differential system $(\dot x, \dot y)^T=Z(x,y;\e)$. Here the dot denotes derivative in the variable $t.$ In short we shall provide sufficient conditions in order to show that for $|\e|\neq0$ sufficiently small the averaged functions \eqref{f} at any order can be used for obtaining information on the existence of crossing limit cycles for systems of this family.

\smallskip

We start defining the family of smooth piecewise differential systems that we shall study. The construction that we shall perform in the sequel has been done in \cite{LliMerNovJDE2015} for a particular class of systems. Let $n>1$ be a positive integer, $\al_n=2\pi$ and $\al=(\al_0,\al_1,\ldots,\al_{n-1})\in\mathbb{T}^n$ is a $n$-tuple of angles such that $0=\al_0<\al_1<\al_2<\cdots<\al_{n-1}< \al_n=2\pi$
and let $\X(x,y;\e)=(X_1,X_2,\ldots,X_n)$ be a $n$-tuple of smooth vector fields defined on an open bounded neighborhood $U\subset\R^2$ of the origin and depending on a small parameter $\e$ in the following way
\begin{equation}\label{Xj}
X_j(x,y;\e)=\sum_{i=0}^k \e^i X_i^j(x,y) \quad\text{for}\quad j=1,2,\ldots,n.
\end{equation}
For $j=1,2,\ldots,n$ let $L_j$ be the intersection between the domain $U$ with the
ray starting at the origin and passing through the point $(\cos
\al_j,\sin \al_j)$, and take  $\Sigma=\bigcup_{j=1}^n L_j.$ We note that $\Sigma$
splits the set $U\backslash \Sigma\subset\R^2$ in $n$ disjoint open
sectors. We denote the sector delimited by $L_j$ and $L_{j+1}$, in counterclockwise sense, by
$C_j$, for $j=1,2,\ldots,n.$

\smallskip

Now let $Z_{\X,\al}:U\rightarrow\R^2$ be a discontinuous piecewise vector field defined as $Z_{\X,\al}(x,y;\e)$ $=X_j(x,y;\e)$ when $(x,y)\in C_j$, and consider the following planar discontinuous piecewise differential system
\begin{equation}\label{planar}
(\dot x,\dot y)^T=Z_{\X,\al}(x,y;\e).
\end{equation}
The above notation means that at each sector $C_j$ we are considering the smooth differential system
\begin{equation}\label{planarsector}
(\dot x,\dot y)^T=\X_j(x,y;\e).
\end{equation}

As our main hypothesis we shall assume that there exists a period annulus $\A$ homeomorphic to $\{(x,y)\in U: 0<|(x,y)|\leq 1\}$, surrounding the origin, fulfilled by crossing periodic solutions of the {\it unperturbed system} $(\dot x,\dot y)^T=Z_{\X,\al}(x,y;0)$.

\subsection{Standard form and main result}

The averaging theory deals with periodic nonautonomous differential systems in the standard form \eqref{smooth}.  Therefore in order to use the averaging theory for studying system \eqref{planar} it has to be written in the standard form. A possible approach for doing this is to consider the polar change of variables $x=r\,\cos\T$ and  $y=r\,\sin\T$. However the appropriate change of variables may depend on the initial system \eqref{planar}. In general, for each $j=1,2,\ldots,n$, after a suitable change of variables system \eqref{planarsector} reads
\begin{equation}
\label{star}
r'(\T)=\dfrac{\dot r(t)}{\dot \T(t)}=\sum_{i=0}^k\e^i F_i^j(\T,r) + \e^{k+1}R^j(\T,r,\e).
\end{equation}
Now $\T \in [\al_{j-1},\al_j],$  $F_i^j:\mathbb{S}^1\times D\rightarrow\R$ and $R^j:\R\times D\times(-\e_0,\e_0)\rightarrow\R$ are $C^{k+1}$ functions depending on the vector fields $X_i^j$, and they are $2\pi$-periodic in the first variable, being $D$ an open bounded interval of $\R_+$ and $\mathbb{S}^1\equiv\R/(2\pi\Z)$.  Furthermore system \eqref{planar} becomes
\begin{equation}\label{s1}
r'(\T)=\sum_{i=0}^k\e^i F_i(\T,r)+\e^{k+1} R(\T,r,\e),
\end{equation}
where
\begin{equation}\label{funcF}
\begin{array}{l}
\displaystyle F_i(\T,r)=\sum_{j=1}^{n}\chi_{[\al_{j-1},\al_j]}(\T) F_i^j(\T,r),\,\, i=0,1,...,k,\quad \text{and}\vspace{0.1cm}\\
\displaystyle R(\T,r,\e)=\sum_{j=1}^{n}\chi_{[\al_{j-1},\al_j]}(\T)R^j(\T,r,\e),
\end{array}
\end{equation}
where the characteristic function $\chi_A(\T)$ of an interval $A$ is defined as
\begin{equation*}
\chi_{A}(\T)=
\begin{cases}
1 & \text{if $\T \in A$},\\
0 & \text{if $\T \not\in A$}.
\end{cases}
\end{equation*}X
System \eqref{s1} is now a nonautonomous periodic discontinuous piecewise differential system having its set of discontinuity formed by $\Sigma=(\{\T=0\}\cup\{\T=\al_1\}\cup\cdots \cup\{\T=\al_{n-1}\})\cap \mathbb{S}^1 \times D$.

\smallskip

Denote by $\varphi(\T,\rho)$ the solution of the system $r'(\T)=F_0(\T,r)$  such that $\f(0,\rho)=\rho$. From now on this last system will be called {\it unperturbed system}. We assume the following hypothesis:
\begin{itemize}
\item[(H1)]  For each $z\in D$ the solution $\varphi(\T,\rho)$  is defined for every $\T\in\s^1$, it reaches $\Sigma$ only at crossing points, and it is $2\pi$-periodic.
\end{itemize}

In what follows we state our main result.

\begin{theorem}
\label{maintheorem}Assume that for some $l \in \{1,2,\ldots,k\}$ the functions defined in \eqref{f} satisfy $f_s=0$ for $s=1,2,\ldots,l-1$ and $f_l\neq0$. Moreover we assume that the function $\f(\T,\rho)$ of the unperturbed system is a $2\pi$-periodic function. If there exists $\rho^* \in D$ such that $f_l(\rho^*)=0$ and $f_l'(\rho^*)\neq0$, then for $|\e| \neq 0$ sufficiently small there exists a $2\pi$-periodic solution $r(\T,\e)$ of system \eqref{s1} such that $r(0,\e) \to \rho^*$ when $\e \to 0$.
\end{theorem}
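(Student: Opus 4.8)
The plan is to reduce the search for $2\pi$-periodic solutions of \eqref{s1} to finding zeros of a displacement map, to Taylor expand that map in $\e$ through the recursive scheme \eqref{y}, and to finish with a one-dimensional sign argument.

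First I would fix a compact interval $K\subset D$ containing $\rho^*$ in its interior and show that for $|\e|$ small the solution $r(\T,\rho,\e)$ of \eqref{s1} with $r(0,\rho,\e)=\rho$ is defined for all $\T\in[0,2\pi]$ and all $\rho\in K$, and is $C^{k+1}$ in $(\rho,\e)$ there. For $\e=0$ this is hypothesis (H1): $\f(\cdot,\rho)$ stays in a compact subset of $D$, meets each discontinuity line $\{\T=\al_j\}$ only at crossing points, and is therefore the concatenation of the smooth flows of the systems \eqref{star}. Since the crossing condition is open and solutions depend continuously on $\e$ uniformly on $K\times[0,2\pi]$, for $|\e|$ small the perturbed solution still crosses each $\{\T=\al_j\}$ and stays in $D$; being a finite composition of $C^{k+1}$ local flows glued along the lines $\{\T=\al_j\}$, it is $C^{k+1}$ in $(\rho,\e)$. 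Because \eqref{s1} is $2\pi$-periodic in $\T$ and its (Filippov) solutions are unique, a $2\pi$-periodic solution of \eqref{s1} is precisely a zero of the displacement map $d(\rho,\e):=r(2\pi,\rho,\e)-\rho$, which is hence $C^{k+1}$ on $K\times(-\e_1,\e_1)$ for some $\e_1>0$.

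Next I would expand $r(\T,\rho,\e)$ in $\e$ about $\e=0$. By the regularity just established and $l\le k$ one gets
\begin{equation*}
r(\T,\rho,\e)=\f(\T,\rho)+\sum_{i=1}^{l}\e^i\,\dfrac{y_i(\T,\rho)}{i!}+\e^{l+1}\tilde R(\T,\rho,\e),
\end{equation*}
with $\tilde R$ continuous and bounded on $[0,2\pi]\times K\times(-\e_1,\e_1)$, where the $y_i$ arise from differentiating \eqref{s1} $i$ times in $\e$ at $\e=0$, expanding each $F_i(\T,r)$ in its second variable around $r=\f(\T,\rho)$ (legitimate since the $F_i$ are $C^{k+1}$ in $r$, even though only piecewise continuous in $\T$), and integrating in $\T$ the resulting linear equations with the initial conditions $y_i(0,\rho)=0$ coming from $r(0,\rho,\e)\equiv\rho$; this reproduces exactly \eqref{y}. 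The only new feature with respect to the smooth case treated in \cite{LliNovTeiN2014,LliNovTeiN2014c} is that the $F_i$ jump at $\T=\al_1,\dots,\al_{n-1}$, but the integrals in \eqref{y} split naturally at those angles and $r$, together with all of its $\e$-derivatives, is continuous across every crossing line (the dependent variable does not jump there), so no correction terms are produced at the discontinuities; alternatively one argues by induction over the sectors $C_1,\dots,C_n$, applying the smooth expansion on each sector and Taylor expanding the $C^{k+1}$ transition map at $\{\T=\al_j\}$. Evaluating at $\T=2\pi$, using that $\f(\cdot,\rho)$ is $2\pi$-periodic so that $\f(2\pi,\rho)=\rho$, and using \eqref{f} together with the hypothesis $f_1=\dots=f_{l-1}=0$, one obtains
\begin{equation*}
d(\rho,\e)=\e^{l}\big(f_l(\rho)+\e\,\bar R(\rho,\e)\big),
\end{equation*}
with $\bar R$ continuous and bounded on $K\times(-\e_1,\e_1)$.

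Finally I would conclude using that $D$ is one-dimensional. The function $f_l$ is at least $C^1$ because $l\le k$; since $f_l(\rho^*)=0$ and $f_l'(\rho^*)\ne0$, $f_l$ strictly changes sign at $\rho^*$, so for each small $\de>0$ there are $\rho^-<\rho^*<\rho^+$ in $K$ with $|\rho^\pm-\rho^*|<\de$ and $f_l(\rho^-)f_l(\rho^+)<0$. Choosing $|\e|$ small enough that $|\e\,\bar R(\rho^\pm,\e)|<\min\{|f_l(\rho^-)|,|f_l(\rho^+)|\}$, the numbers $d(\rho^-,\e)$ and $d(\rho^+,\e)$ have opposite signs, so by continuity of $\rho\mapsto d(\rho,\e)$ there exists $\rho(\e)\in(\rho^-,\rho^+)$ with $d(\rho(\e),\e)=0$; letting $\de\to0$ yields a branch with $\rho(\e)\to\rho^*$ as $\e\to0$. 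Then, by $2\pi$-periodicity in $\T$ and uniqueness of solutions, $r(\T,\e):=r(\T,\rho(\e),\e)$ is a $2\pi$-periodic solution of \eqref{s1} with $r(0,\e)=\rho(\e)\to\rho^*$. I expect the main obstacle to be the second step: establishing that the solution of the \emph{discontinuous} system \eqref{s1} still obeys the recursion \eqref{y}, i.e. that gluing the smooth pieces along the crossing lines $\{\T=\al_j\}$ contributes nothing to the averaged functions \eqref{f}; the remaining steps are standard ODE theory and a one-dimensional continuity argument.
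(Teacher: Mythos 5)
Your proposal follows essentially the same route as the paper: reduce to the displacement map $d(\rho,\e)=r(2\pi,\rho,\e)-\rho$, expand the solution of the discontinuous system in powers of $\e$ so that the coefficients at $\T=2\pi$ are the averaged functions \eqref{f} (this is the paper's Lemma \ref{lemma} together with the Proposition of Section \ref{theaveragedfunctions}, established precisely by the sector-by-sector induction you sketch as your fallback argument), and conclude from the nondegenerate zero of $f_l$. The only divergence is at the final step, where the paper applies the Implicit Function Theorem to $\mathcal{F}(\rho,\e)=f(\rho,\e)/\e^l$ while you use a sign-change/intermediate-value argument, which is equally valid in this one-dimensional setting.
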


The assumption $D\subset\R$ is not restrictive. In fact, if one consider $D$ as being an open subset of $\R^n$ the conclusion of Theorem \ref{maintheorem} still holds by assuming that the Jacobian matrix $J f_l(\rho^*)$ is nonsingular, that is $\det(J f_l(\rho^*))\neq0$. In this case the derivative $\p^L G(\phi,\rho)$ is a symmetric $L$-multilinear map which is applied to a ``product'' of
$L$ vectors of $\R^n$, denoted as $\prod_{j=1}^Ly_j\in
\R^{nL}$ (see \cite{LliNovTeiN2014}).

\smallskip

For the particular class of systems \eqref{s1} Theorem \ref{maintheorem} generalizes the main results of \cite{LliMerNovJDE2015,LliNovPreprint2014,LliNovTeiBSM2015}, increasing the order of the averaging theory. It also generalizes the main results of \cite{LliItiNov2015,WZ} dealing now with nonvanishing unperturbed systems and allowing more zones of continuity.

\smallskip

This paper is organized as follows. In section \ref{theaveragedfunctions} we provide, explicitly, the formulae of the averaged functions \eqref{f} for nonsmooth systems in the standard form \eqref{s1}. In section \ref{proofofthemainresult} we state some auxiliar results for proving Theorem \ref{maintheorem}.
In section \ref{examples} we use Theorem \ref{maintheorem} to give an estimative for the number of limit cycles of three types of planar systems: nonsmooth perturbation of a linear center, a nonsmooth perturbation of a discontinuous piecewise constant center, and a nonsmooth perturbation of an isocrhonous quadratic center.

\section{The averaged functions}
\label{theaveragedfunctions}
In this section we develop a recurrence to compute the averaged function \eqref{f} in the particular case of the discontinuous differential equation \eqref{s1}. So consider the functions $z_i^j:(\al_{j-1},\al_{j}]\times D \to \R$ defined recurrently for $i=1,2,\ldots,k$ and $j=1,2,\ldots,n,$ as

\begin{equation}\label{z11}
\begin{array}{RL}
z_1^1(\T,\rho)=&\int_0^{\T} \bigg( F_1^1(\phi,\f(\phi,\rho)) + \partial F_0^1(\phi,\f(\phi,\rho))z_1^1(\phi,\rho) \bigg)d\phi,\vspace{0.3cm}\\

z_i^1(\T,\rho)=& i!\int_{0}^{\T}\bigg(F_i^{1}(\phi,\f(\phi,\rho))\vspace{0.2cm}\\
&+\sum_{l=1}^{i}\sum_{S_l}\dfrac{1}{b_1!\,b_2!2!^{b_2}\cdots b_l!l!^{b_l}}\cdot\partial ^L F_{i-l}^{1}(\phi,\f(\phi,\rho)) \prod_{m=1}^l z_m^{1}(\phi,\rho)^{b_m} \bigg)d\phi,\vspace{0.3cm}\\

z_i^j(\T,\rho)=&z_i^{j-1}(\al_{j-1},\rho)+i! \int_{\al_{j-1}}^{\T}\bigg(F_i^{j}(\phi,\f(\phi,\rho))\vspace{0.2cm}\\
&+\sum_{l=1}^{i}\sum_{S_l}\dfrac{1}{b_1!\,b_2!2!^{b_2}\cdots b_l!l!^{b_l}}\cdot\partial ^L F_{i-l}^{j}(\phi,\f(\phi,\rho)) \prod_{m=1}^l z_m^{j}(\phi,\rho)^{b_m} \bigg)d\phi.
\end{array}
\end{equation}
Thus we have the next result.
\begin{proposition}
For $i=1,2,\ldots,k,$ the averaged function \eqref{f} of order $i,$  is
\begin{equation}
\label{fpromediada}
f_i(\rho)=\dfrac{z_i^n(2\pi,\rho)}{i!}.
\end{equation}
\end{proposition}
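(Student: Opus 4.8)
Since \eqref{f} sets $f_i(\rho)=y_i(2\pi,\rho)/i!$ with $y_i$ the solution of the recurrence \eqref{y}, and in \eqref{s1} one has $F_i=\sum_{j=1}^n\chi_{[\al_{j-1},\al_j]}F_i^j$ by \eqref{funcF}, the proposition amounts to the identity $y_i(2\pi,\rho)=z_i^n(2\pi,\rho)$. The plan is to prove the sharper statement
\[
y_i(\T,\rho)=z_i^j(\T,\rho)\qquad\text{for all }\ \T\in(\al_{j-1},\al_j],\ \ i=1,\dots,k,\ \ j=1,\dots,n,
\]
by a double induction, the outer one on the sector index $j$ and, for fixed $j$, an inner one on the order $i$. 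A few preliminaries make this legitimate: on the closed sector $[\al_{j-1},\al_j]$ the function $\f(\cdot,\rho)$ solves $r'=F_0^j(\T,r)$, so it is continuous there and across the lines $L_j$; each $F_i^j$ is $C^{k+1}$; hence every integrand occurring in \eqref{y} and \eqref{z11} is bounded and piecewise continuous, all integrals are well defined, and for fixed $i$ — once $y_1,\dots,y_{i-1}$ are known — \eqref{y} is a linear Volterra equation for $y_i$, its only $y_i$-term being the one coming from $l=i$, $b_i=1$, namely $\p F_0(\phi,\f)\,y_i(\phi)$ (the weight $1/i!$ being cancelled by the prefactor $i!$); consequently \eqref{y} has a unique continuous solution.

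The outer induction then runs as follows. For $j=1$ and $\T\in(0,\al_1]$ all integrals in \eqref{y} are over $[0,\T]\subset[\al_0,\al_1]$, where $F_i\equiv F_i^1$ and therefore $\p^L F_{i-l}\equiv\p^L F_{i-l}^1$; since moreover $y_i(0,\rho)=0$, comparison with the first block of \eqref{z11} yields $y_i=z_i^1$ on $(0,\al_1]$. For the inductive step fix $j\ge2$ and split $\int_0^{\T}=\sum_{m=1}^{j-1}\int_{\al_{m-1}}^{\al_m}+\int_{\al_{j-1}}^{\T}$ in the defining relation for $y_i$. On each $[\al_{m-1},\al_m]$ with $m\le j-1$ one has $F_\cdot\equiv F_\cdot^m$, and the outer hypothesis gives $y_\cdot\equiv z_\cdot^m$ there, so $i!\int_{\al_{m-1}}^{\al_m}(\cdots)=z_i^m(\al_m,\rho)-z_i^{m-1}(\al_{m-1},\rho)$ directly from \eqref{z11} (with the understanding that the $m=1$ term is just $z_i^1(\al_1,\rho)$); these differences telescope to $z_i^{j-1}(\al_{j-1},\rho)$. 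Hence
\[
y_i(\T,\rho)=z_i^{j-1}(\al_{j-1},\rho)+i!\int_{\al_{j-1}}^{\T}\Big(F_i^j(\phi,\f(\phi,\rho))+\big(\text{lower-order terms in the }F^j\text{'s}\big)\Big)d\phi,
\]
where the inner hypothesis identifies $y_m$ with $z_m^j$ on $(\al_{j-1},\T)$ for $m<i$. Thus $y_i$ and $z_i^j$ obey the same linear Volterra equation on $(\al_{j-1},\al_j]$ with the same value $z_i^{j-1}(\al_{j-1},\rho)$ at $\al_{j-1}$, so $y_i\equiv z_i^j$ there by uniqueness. Putting $j=n$, $\T=\al_n=2\pi$ and dividing by $i!$ gives \eqref{fpromediada}.

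The step I expect to demand the most care is the combinatorial bookkeeping inside the inductive step: checking that splitting the integral of \eqref{y} over the partition $\{\al_m\}$ reproduces sector by sector precisely the weights $1/(b_1!\,b_2!2!^{b_2}\cdots b_l!l!^{b_l})$ and index sets $S_l$ of \eqref{z11}, and that the telescoping of the boundary values is the correct one — i.e. that what is handed from sector $j-1$ to sector $j$ is $z_i^{j-1}(\al_{j-1},\rho)$. The analytic ingredients are routine: piecewise continuity of the integrands, continuity of $\f(\cdot,\rho)$ across the $L_j$ (which is what makes $y_i$ and $z_i^j$ continuous in $\T$), and uniqueness for the linear Volterra equation satisfied by $y_i$, a one-line Gronwall estimate once $y_1,\dots,y_{i-1}$ are fixed. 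It is worth emphasizing that this proposition is a purely formal identity between two recursions; the crossing hypothesis (H1) and the smooth dependence of the flow on each sector enter only afterwards, when the functions \eqref{f} are used to produce the crossing periodic solutions of \eqref{s1}.
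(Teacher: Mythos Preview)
Your proposal is correct and follows essentially the same approach as the paper: both arguments identify $y_i$ with the piecewise function built from the $z_i^j$'s by checking that they satisfy the same linear (Volterra/differential) equation with the same initial data, using the fact that on each sector the global $F_i$ coincides with $F_i^j$ and the boundary values telescope. The only cosmetic difference is that the paper first packages the $z_i^j$'s into a single piecewise function $z_i(\T,\rho)=\sum_j\chi_{[\al_{j-1},\al_j]}(\T)z_i^j(\T,\rho)$ and then compares $z_i$ to $y_i$ globally, whereas you carry out the comparison sector by sector via an explicit double induction; the content is the same.
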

\begin{proof}For each $i=1,2,\cdots,k,$ define
\begin{equation}\label{zprom}
z_i(\T,\rho)=\sum_{j=1}^{n}\chi_{[\al_{j-1},\al_j]}(\T)z_i^j(\T,\rho).
\end{equation}
Given $\T \in [0,2\pi]$ there exists a positive integer $\bar{k}$ such that $\T \in (\al_{\bar{k}-1}, \al_{\bar{k}}]$ and, therefore $z_i(\T,\rho)=z_i^{\bar{k}}(\T,\rho)$. Moreover using the expressions \eqref{funcF} and \eqref{zprom} we can write \eqref{z11} into the form
\begin{equation}
\label{recurrencezi}
\begin{array}{RL}
z_1^1(\T,\rho)=&\int_0^{\T} \bigg( F_1(\phi,\f(\phi,\rho)) + \partial F_0(\phi,\f(\phi,\rho))z_1(\phi,\rho) \bigg)d\phi,\vspace{0.3cm}\\

z_i^{1}(\T,\rho)=&i! \int_{0}^{\T}\bigg(F_i(\phi,\f(\phi,\rho))\vspace{0.2cm}\\
&+\sum_{l=1}^{i}\sum_{S_l}\dfrac{1}{b_1!\,b_2!2!^{b_2}\cdots b_l!l!^{b_l}}\partial ^L F_{i-l}(\phi,\f(\phi,\rho)) \prod_{m=1}^l z_m(\phi,\rho)^{b_m} \bigg)d\phi,\vspace{0.3cm}\\

z_i^{\bar{k}}(\T,\rho)=&z_i^{\bar{k}-1}(\al_{\bar{k}-1},\rho)+i! \int_{\al_{\bar{k}-1}}^{\T}\bigg(F_i(\phi,\f(\phi,\rho))\vspace{0.2cm}\\
&+\sum_{l=1}^{i}\sum_{S_l}\dfrac{1}{b_1!\,b_2!2!^{b_2}\cdots b_l!l!^{b_l}}\partial ^L F_{i-l}(\phi,\f(\phi,\rho)) \prod_{m=1}^l z_m(\phi,\rho)^{b_m} \bigg)d\phi.
\end{array}
\end{equation}
In the above equality we are denoting
\[
\partial ^L F_{i-l}(\phi,\f(\phi,\rho)) =\sum_{j=1}^{n}\chi_{[\al_{j-1},\al_j]}(\phi)\partial ^L F_{i-l}^j(\phi,\f(\phi,\rho)).
\]

Proceeding recursively on $\bar{k}$ we obtain
\begin{equation}
\label{expressionz1}
\begin{array}{RL}
z_1(\T,\rho)=&\int_0^{\T} \bigg( F_1(\phi,\f(\phi,\rho)) + \partial F_0(\phi,\f(\phi,\rho))z_1(\phi,\rho) \bigg)d\phi,\vspace{0.3cm}\\
z_i(\T,\rho)=&\sum_{p=1}^{\bar{k}-1} \int_{\al_{p-1}}^{\al_p}\bigg(F_i^{p}(\phi,\f(\phi,\rho))+\sum_{l=1}^{i}\sum_{S_l}\dfrac{1}{b_1!\,b_2!2!^{b_2}\cdots b_l!l!^{b_l}}\vspace{0.2cm}\\
&\cdot\partial ^L F_{i-l}^{p}(\phi,\f(\phi,\rho)) \prod_{m=1}^l z_m^{p}(\phi,\rho)^{b_m} \bigg)d\phi +\int_{\al_{\bar{k}-1}}^{\T}\bigg(F_i^{\bar{k}}(\phi,\f(\phi,\rho))\vspace{0.2cm}\\
&+\sum_{l=1}^{i}\sum_{S_l}\dfrac{1}{b_1!\,b_2!2!^{b_2}\cdots b_l!l!^{b_l}}\partial ^L F_{i-l}^{\bar{k}}(\phi,\f(\phi,\rho)) \prod_{m=1}^l z_m^{\bar{k}}(\phi,\rho)^{b_m} \bigg)d\phi\vspace{0.3cm}\\
=&i!\int_{0}^{\T}\bigg(F_i(\phi,\f(\phi,\rho))+\sum_{l=1}^{i}\sum_{S_l}\dfrac{1}{b_1!\,b_2!2!^{b_2}\cdots b_l!l!^{b_l}}\vspace{0.2cm}\\
&\cdot\partial ^L F_{i-l}(\phi,\f(\phi,\rho)) \prod_{m=1}^l z_m(\phi,\rho)^{b_m} \bigg)d\phi.
\end{array}
\end{equation}

Computing the derivative in the variable $\T$ of the expressions \eqref{expressionz1} and \eqref{y} for $i=1$ we see that the functions $z_1(\T,\rho)$ and $y_1(\T,\rho)$ satisfy the same differential equation. Moreover for each $i=2,\cdots,k$, the integral equations \eqref{y} and \eqref{expressionz1} which provides respectively $y_i$ and $z_i$ are defined by the same recurrence. Therefore we conclude that $y_i$ and $z_i$ satisfy the same differential equations for $i=1,2,\cdots,k$, which are linear with variable coefficients (that is the Existence and Uniqueness Theorem holds). Now, it only remains  to prove that their initial conditions coincide. Let $i \in \{1,2,\ldots,k\}$, then $y_i(0,\rho)=0$ and by \eqref{recurrencezi} $z_i(0,\rho)=0$, concluding that the initial conditions are the same.  Hence $y_i(\T,\rho)=z_i(\T,\rho)$, conlcuding the Proposition. \end{proof}

\smallskip

Note that when $F_0\neq0$ the recurrence defined in \eqref{z11} is actually an integral equation. Moreover in order to implement an algorithm to compute the averaged function, it may be easier to write each $z_i^j$ in terms of the partial Bell polynomials, which are already implemented in algebraic manipulators as Mathematica and Maple. For each pair of nonnegative integers $(p,q)$, the partial {\it Bell polynomial} is defined as
\[
B_{p,q}(x_1,x_2,\ldots,x_{p-q+1})=\sum_{\widetilde{S}_{p,q}}\frac{p!}{b_1! b_2! \cdots b_{p-q+1}!} \prod_{j=1}^{p-q+1} \bigg(\frac{x_j}{j!}\bigg)^{b_j},
\]
where $\widetilde{S}_{p,q}$ is the set of all $(p-q+1)$-tuple of nonnegative integers $(b_1,b_2, \ldots, b_{p-q+1})$
satisfying $b_1 + 2b_2 + \cdots + (p-q + 1)b_{p-q+1} = p$, and $b_1 + b_2 + \cdots + b_{p-q+1} = q$. In the next proposition, following \cite{Novaes2016}, we solve the integral equation \eqref{z11} to provide the explicit recurrence formula for $z_i^j$ in terms of the Bell polynomials.
\begin{proposition}
\label{lemmabell}
For each $j=1,2,\ldots,n$ let $\eta_{j}(\T,\rho)$ be defined as
\[
\eta_{j}(\T,\rho)=\int_{\al_{{j}-1}}^{\T} \p F_0^{j}(\phi,\f(\phi,\rho))d\phi.
\] Then for $i=1,2,\ldots,k$ and $j=1,2,\ldots,n$ the recurrence \eqref{z11} can be written as follows

\begin{equation*}
\begin{array}{RLL}
z_1^1(\T,\rho)=&\!\!\!\!e^{\eta_1(\T,\rho)}\int_0^{\T} e^{-\eta_1(\phi,\rho)}F_1^1(\phi,\f(\phi,\rho))d\phi,& \vspace{0.3cm}\\

z_1^j(\T,\rho)=&\!\!\!\!e^{\eta_{j}(\T,\rho)}\bigg(z_1^{j-1}(\al_{j-1},\rho)+\int_{\al_{j-1}}^{\T}e^{-\eta_{j}(\phi,\rho)}F_1^{j}(\phi,\f(\phi,\rho))d\phi\bigg),& \mbox{for $j=2$},\ldots\vspace{0.3cm}\\

z_i^1(\T,\rho)=&\!\!\!\!e^{\eta_{1}(\T,\rho)}i!\int_{0}^{\T}e^{-\eta_{1}(\phi,\rho)}\bigg[F_i^{1}(\phi,\f(\phi,\rho))\\
&+\sum_{l=1}^{i-1}\sum_{m=1}^l\frac{1}{l!}\partial ^m F_{i-l}^{1}(\T,\f(\T,\rho)) B_{l,m}(z_1^1,z_2^1,\ldots,z_{l-m+1}^1)\\
&+\sum_{m=2}^i\frac{1}{i!}\partial ^m F_{0}^{1}(\T,\f(\T,\rho))B_{i,m}(z_1^1,z_2^1,\ldots,z_{i-m+1}^1)\bigg] d\phi, &\mbox{for $i=2$},\ldots\vspace{0.3cm}\\

z_i^j(\T,\rho)=&\!\!\!\!e^{\eta_{j}(\T,\rho)}\bigg(z_i^{j-1}(\al_{j-1},\rho)+i!\int_{\al_{j-1}}^{\T} e^{-\eta_{j}(\phi,\rho)}\bigg[F_i^{j}(\phi,\f(\phi,\rho))\\
&+\sum_{l=1}^{i-1}\sum_{m=1}^l\frac{1}{l!}\partial ^m F_{i-l}^{j}(\T,\f(\T,\rho))B_{l,m}(z_1^j,z_2^j,\ldots,z_{l-m+1}^j)\\
&+\sum_{m=2}^i\frac{1}{i!}\partial ^m F_{0}^{j}(\T,\f(\T,\rho))B_{i,m}(z_1^j,z_2^j,\ldots,z_{i-m+1}^j)\bigg] d\phi\bigg),& \mbox{for $i,j=2$},\ldots.
\end{array}
\end{equation*}
\end{proposition}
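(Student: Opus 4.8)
The plan is to observe that, for each fixed pair $(i,j)$, the recurrence \eqref{z11} is a \emph{linear Volterra integral equation} in the single unknown function $\T\mapsto z_i^j(\T,\rho)$, once $z_1^j,\ldots,z_{i-1}^j$ and the carry-over value $z_i^{j-1}(\al_{j-1},\rho)$ are treated as already known. The key combinatorial point is that $z_i^j$ occurs on the right-hand side of \eqref{z11} in exactly one place: in $\sum_{l=1}^i\sum_{S_l}(\cdots)$ the monomial $\prod_{m=1}^l (z_m^j)^{b_m}$ involves $z_i^j$ only for $l=i$ and $(b_1,\ldots,b_i)=(0,\ldots,0,1)$, which forces $L=1$, couples it to $\p F_{i-l}^j=\p F_0^j$, and carries coefficient $1/(b_1!\cdots b_i!\,i!^{b_i})=1/i!$; hence, after the prefactor $i!$, that term is precisely $\p F_0^j(\phi,\f(\phi,\rho))\,z_i^j(\phi,\rho)$, while every other summand depends only on $z_1^j,\ldots,z_{i-1}^j$.

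First I would split the integrand of \eqref{z11} as $G_i^j(\phi,\rho)+\tfrac1{i!}\p F_0^j(\phi,\f(\phi,\rho))\,z_i^j(\phi,\rho)$, where $G_i^j$ collects the $z_i^j$-free part (with the carry-over term dropped when $j=1$, and $G_1^j=F_1^j$ when $i=1$). Next I would rewrite the $S_l$-sums inside $G_i^j$ in terms of partial Bell polynomials: grouping the tuples of $S_l$ by the common value $L$ of $b_1+\cdots+b_l$ and comparing with the definition of $B_{l,L}$, one gets
\[
\sum_{S_l}\frac{1}{b_1!\,b_2!2!^{b_2}\cdots b_l!l!^{b_l}}\,\p^L F_{i-l}^j\,\prod_{m=1}^l (z_m^j)^{b_m}=\sum_{L=1}^{l}\frac{1}{l!}\,\p^L F_{i-l}^j\,B_{l,L}\bigl(z_1^j,\ldots,z_{l-L+1}^j\bigr),
\]
because a tuple counted by $\widetilde S_{l,L}$ can have nonzero entries only at indices $\le l-L+1$. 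Isolating from this the unique term containing $z_i^j$ — the $l=i$, $L=1$ term, where $B_{i,1}=z_i^j$ — leaves exactly the bracketed expression in the statement as the formula for $G_i^j$; for $i=1$ there are no Bell-polynomial terms and $G_1^j=F_1^j$, matching the first two displayed lines.

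Finally, differentiating the split integral equation in $\T$ yields the linear ODE $\p_\T z_i^j=i!\,G_i^j(\T,\rho)+\p F_0^j(\T,\f(\T,\rho))\,z_i^j$ with initial value $z_i^j(\al_{j-1},\rho)=z_i^{j-1}(\al_{j-1},\rho)$ (resp.\ $0$ when $j=1$); solving it by the integrating factor $e^{-\eta_j(\T,\rho)}$, and using $\eta_j(\al_{j-1},\rho)=0$, gives the asserted closed form, existence and uniqueness being guaranteed by linearity. The cases $i\ge2$, $j=1$ and $i=1$, $j\ge1$ are the same computation with $\al_{j-1}$ replaced by $0$ and/or the carry-over term absent. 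The only genuinely delicate step is the bookkeeping in the middle paragraph — checking that $z_i^j$ appears exactly once in \eqref{z11} and matching the resulting Bell-polynomial sums index-by-index with the statement — which is carried out exactly as in \cite{Novaes2016}.
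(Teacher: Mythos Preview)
Your proposal is correct and follows essentially the same route as the paper: rewrite the $S_l$-sum via partial Bell polynomials, observe that $z_i^j$ appears only in the single $l=i$, $m=1$ term (where $B_{i,1}=z_i^j$), isolate it to obtain a linear first-order Cauchy problem, and solve by the integrating factor $e^{-\eta_j}$. The paper carries this out explicitly only for $j=1$ and declares the remaining cases analogous, whereas you describe all $(i,j)$ uniformly, but the argument is the same.
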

\begin{proof}
We shall prove this proposition for $i=1,2,\ldots,k$, and $j=1.$ The other cases will follow in a similar way.

\smallskip

For $i=j=1$, the integral equation \eqref{z11} is equivalent to the following Cauchy problem:
\[
\dfrac{\p z_1^1}{\p\T}(\T,\rho)=F_1^1\left(\T,\f(\T,\rho)\right)+\p F_0^1\left(\T,\f(\T,\rho)\right)u \,\,\text{ with }\,\, z_1^1(0,\rho)=0.
\]
Solving the above linear differential equation we get
\[
z_1^1(\T,\rho)=e^{\eta_1(\T,\rho)}\int_0^{\T} e^{-\eta_1(\phi,\rho)}F_1^1(\phi,\f(\phi,\rho))d\phi.
\]

Now for $i=2,\ldots,k$ and $j=1$ the recurrence \eqref{z11} can be written in terms of the partial Bell polynomials as (for more details, see \cite{Novaes2016})
\begin{equation}
\label{Bellzij}
\begin{array}{RL}
z_i^1(\T,\rho)=& i!\int_{0}^{\T}\bigg(F_i^{1}(\phi,\f(\phi,\rho))\\
&+\sum_{l=1}^{i}\sum_{m=1}^l\frac{1}{l!}\partial ^m F_{i-l}^{1}(\phi,\f(\phi,\rho))B_{l,m}(z_1^1,z_2^1,\ldots,z_{l-m+1}^1)\bigg)d\phi.
\end{array}
\end{equation}
We note that the function $z_i^1$ appears in the right hand side of \eqref{Bellzij} only if  $l=i$ and $m=1$. In this case $B_{i,1}(z_1^1,z_2^1,\ldots,z_{i}^1)=z_i^1$ for every $i\geq 1$. So we can rewriting \eqref{Bellzij} as the following integral equation
\begin{equation*}
\begin{array}{RL}
z_i^1(\T,\rho)
=&i!\int_{0}^{\T}\bigg(F_i^{1}(\phi,\f(\phi,\rho))\\
&+\sum_{l=1}^{i-1}\sum_{m=1}^l\frac{1}{l!}\partial ^m F_{i-l}^{1}(\phi,\f(\phi,\rho))B_{l,m}(z_1^1,z_2^1,\ldots,z_{l-m+1}^1)\\
&+\sum_{m=2}^i\frac{1}{i!}\partial ^m F_{0}^{1}(\phi,\f(\phi,\rho))B_{i,m}(z_1^1,z_2^1,\ldots,z_{i-m+1}^1)\\
&+\frac{1}{i!}\p F_0^1(\phi,\f(\phi,\rho)) B_{i,1}(z_1^1,z_2^1,\ldots,z_{i}^1)\bigg) d\phi,
\end{array}
\end{equation*}
which is equivalent to the following Cauchy problem:
\[
\begin{array}{RL}
\frac{\p z_i^1}{\p\T}(\T,\rho)=&i!\left[F_i^{1}(\T,\f(\T,\rho))+\frac{1}{i!}\p F_0^1(\T,\f(\T,\rho)) z_{i}^1\right.\\
&+\sum_{l=1}^{i-1}\sum_{m=1}^l\frac{1}{l!}\partial ^m F_{i-l}^{1}(\T,\f(\T,\rho))B_{l,m}(z_1^1,z_2^1,\ldots,z_{l-m+1}^1)\\
&\left.+\sum_{m=2}^i\frac{1}{i!}\partial ^m F_{0}^{1}(\T,\f(\T,\rho))B_{i,m}(z_1^1,z_2^1,\ldots,z_{i-m+1}^1)\right],\\
 z_i^1(0,\rho)=&0.
\end{array}
\]
Solving the above linear differential equation we obtain the expressions of $z_i^1(\T,\rho),$ for $i=2,\ldots,k,$ given in the statement of  the proposition.
\end{proof}

\section{Proof of the main result}
\label{proofofthemainresult}
In this section we shall present the proof of Theorem \ref{maintheorem}. This proof is based on a preliminary result (see Lemma \ref{lemma}) which expands the solutions of the discontinuous differential equation \eqref{s1} in powers of $\e$.

\smallskip

From hypothesis $(H1)$ the solution $\varphi(\T,\rho)$ of the unperturbed system reads
\begin{equation*}
\varphi(\T,\rho)=
\begin{cases}
\varphi_1(\T,\rho) & \text{if $0=\al_0 \leq \T \leq \al_1$},\\
\vdots \\
\varphi_j(\T,\rho) & \text{if $\al_{j-1} \leq \T \leq \al_j$},\\
\vdots \\
\varphi_n(\T,\rho) & \text{if $\al_{n-1} \leq \T \leq \al_n=2\pi$},
\end{cases}
\end{equation*}
such that, for each $j=1,2,\ldots,n$, $\varphi_j$ is the solution of the unperturbed system with the initial condition $\f_{j}(\al_{j-1},\rho)=\f_{j-1}(\al_{j-1},\rho)$.

Now for $j=1,2,\ldots,n$ let $\xi_j(\T,\T_0,\rho_0,\e)$ be the solution of the discontinuous differential equation \eqref{star} such that $\xi_j(\T_0,\T_0,\rho_0,\e)=\rho_0.$  We then define the recurrence
\[
 r_j(\T,\rho,\e)=\xi_j(\T,\al_{j-1},r_{j-1}(\al_{j-1},\rho,\e),\e),\quad j=2,\ldots,n,
 \]
 with initial condition  $r_1(\T,\rho,\e)=\xi_1(\T,0,\rho,\e).$ From hypothesis $(H1)$ it is easy to see that each $r_j(\T,\rho,\e)$ is defined for every $\T\in[\al_{j-1},\al_j]$. Therefore $r(\cdot,\rho,\e):[0,2\pi] \to \R$ defined as
\begin{equation*}
\label{sol}
r(\T,\rho,\e)=
\begin{cases}
r_1(\T,\rho,\e) & \text{if $0=\al_0 \leq \T \leq \al_1$},\\
r_2(\T,\rho,\e) & \text{if $\al_1 \leq \T \leq \al_2$},\\
\vdots \\
r_j(\T,\rho,\e) & \text{if $\al_{j-1} \leq \T \leq \al_j$},\\
\vdots \\
r_n(\T,\rho,\e) & \text{if $\al_{n-1} \leq \T \leq \al_n=2\pi$},
\end{cases}
\end{equation*}
is the solution of the differential equation \eqref{s1} such that $r(0,\rho,\e)=\rho$. Moreover the equalities hold
\begin{equation}\label{initial}
r_1(0,\rho,\e)=\rho \,\, \text{and} \,\, r_j(\al_{j-1},\rho,\e)=r_{j-1}(\al_{j-1},\rho,\e),
\end{equation}
for $j=1,2,\ldots, n$. Clearly $r_j(\T,\rho,0)=\varphi_j(\T,\rho)$ for all $j=1,2,\ldots,n$.

\begin{lemma}\label{lemma}
 For $j \in \{1,2,\ldots,n\}$ and $\T_{\rho}^{j}>\al_j,$ let $r_j(\cdot,\rho,\e):[\al_{j-1},\T_{\rho}^{j})$ be the solution of \eqref{star}. Then
\[
r_j(\T,\rho,\e)=\f_j(\T,\rho)+\sum_{i=1}^k \frac{\e^i}{i!} z_i^j(\T,\rho) + \CO(\e^{k+1}),
\]
where $z_i^j(\T,\rho)$ is defined in \eqref{z11}.
\end{lemma}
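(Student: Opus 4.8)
The plan is to prove the expansion sector by sector, using induction on $j$ and, within each sector, the classical result on the dependence of solutions of smooth ODEs on a parameter. First I would recall that on the first sector $[\al_0,\al_1]$ the function $r_1(\T,\rho,\e)=\xi_1(\T,0,\rho,\e)$ solves the smooth differential equation \eqref{star} with $j=1$, namely $r'=\sum_{i=0}^k\e^iF_i^1(\T,r)+\e^{k+1}R^1(\T,r,\e)$, with the fixed initial condition $r_1(0,\rho,\e)=\rho$ independent of $\e$. Since $\f_1(\T,\rho)=r_1(\T,\rho,0)$ solves the unperturbed equation $r'=F_0^1(\T,r)$, one can look for a formal expansion $r_1(\T,\rho,\e)=\f_1(\T,\rho)+\sum_{i\ge1}\e^i u_i(\T,\rho)/i!+\CO(\e^{k+1})$; substituting into \eqref{star}, expanding each $F_i^1(\T,r_1)$ in a Taylor series around $\f_1(\T,\rho)$, and matching powers of $\e$ shows that the $u_i$ satisfy exactly the linear integral/differential recurrence \eqref{z11} defining $z_i^1$ (with the initial condition $z_i^1(0,\rho)=0$ coming from $r_1(0,\rho,\e)=\rho$). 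The combinatorial bookkeeping — that the coefficient of $\prod_j y_j^{b_j}$ in the $L$-th Taylor term is precisely $\frac{1}{b_1!b_2!2!^{b_2}\cdots b_l!l!^{b_l}}$ — is the classical Faà di Bruno / Bell polynomial identity already used in \cite{LliNovTeiN2014,Novaes2016}, so I would cite it rather than rederive it. The remainder is $\CO(\e^{k+1})$ uniformly on the compact $\T$-interval $[\al_0,\al_1]$ by the standard smooth-dependence theorem, using that $F_i^1$ and $R^1$ are $C^{k+1}$.

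Next I would run the induction on $j$. Suppose the claim holds for $r_{j-1}$, so in particular
\[
r_{j-1}(\al_{j-1},\rho,\e)=\f_{j-1}(\al_{j-1},\rho)+\sum_{i=1}^k\frac{\e^i}{i!}z_i^{j-1}(\al_{j-1},\rho)+\CO(\e^{k+1}).
\]
By definition $r_j(\T,\rho,\e)=\xi_j(\T,\al_{j-1},r_{j-1}(\al_{j-1},\rho,\e),\e)$ solves the smooth equation \eqref{star} on $[\al_{j-1},\T_\rho^j)$ with initial value $r_j(\al_{j-1},\rho,\e)=r_{j-1}(\al_{j-1},\rho,\e)$, which now depends on $\e$. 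The same Taylor-matching argument gives $r_j(\T,\rho,\e)=\f_j(\T,\rho)+\sum_{i=1}^k\frac{\e^i}{i!}w_i^j(\T,\rho)+\CO(\e^{k+1})$, where the $w_i^j$ satisfy the same linear recurrence as in \eqref{z11} but with the nonzero initial condition $w_i^j(\al_{j-1},\rho)=z_i^{j-1}(\al_{j-1},\rho)$ inherited from the expansion of $r_{j-1}(\al_{j-1},\rho,\e)$; here one uses that $\f_j$ was defined precisely with the matching condition $\f_j(\al_{j-1},\rho)=\f_{j-1}(\al_{j-1},\rho)$, so the zeroth-order terms agree. By uniqueness of solutions of the linear recurrence (each $z_i^j$, given the lower-order $z_m^j$, solves a first-order \emph{linear} ODE in $\T$, so Existence and Uniqueness applies exactly as in the proof of Proposition~3), $w_i^j=z_i^j$ on $[\al_{j-1},\T_\rho^j)$, closing the induction.

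The main obstacle, and the step I would treat most carefully, is the propagation of the $\CO(\e^{k+1})$ remainder across the junction points $\al_{j-1}$: one must check that feeding an $\e$-dependent initial condition of the form (leading term) $+\sum\e^i(\cdot)+\CO(\e^{k+1})$ into the smooth flow $\xi_j$ still yields an expansion whose error is genuinely $\CO(\e^{k+1})$ and not merely $\CO(\e^{k})$. This is where the smoothness hypothesis ($F_i^j,R^j$ of class $C^{k+1}$) is essential: the flow map $(\rho_0,\e)\mapsto\xi_j(\T,\al_{j-1},\rho_0,\e)$ is $C^{k+1}$ jointly, so composing it with a $C^{k+1}$-in-$\e$ initial condition and Taylor-expanding to order $k$ leaves a controlled remainder, uniformly for $\T$ in the compact interval $[\al_{j-1},\al_j]$ (note $\T_\rho^j>\al_j$ guarantees the solution exists slightly beyond the sector, so compactness is not an issue). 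A secondary bookkeeping point is that the ``$\partial^L F_{i-l}$'' terms in \eqref{z11} couple all lower-order $z_m^j$ from the \emph{same} sector together with the inherited constant $z_i^{j-1}(\al_{j-1},\rho)$; one should verify that the recurrence structure of \eqref{z11} is exactly what the Taylor-matching produces, which is immediate once the first sector is done since the only change is the additive initial constant.
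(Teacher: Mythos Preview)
Your proposal is correct and follows essentially the same approach as the paper: both arguments proceed by induction on the sector index $j$, expand $r_j$ in powers of $\e$ using Fa\`a di Bruno to identify the coefficients $w_i^j=\partial_\e^i r_j|_{\e=0}$, and then show $w_i^j=z_i^j$ by checking that both satisfy the same first-order linear ODE in $\T$ with the same (recursively matched) initial data at $\al_{j-1}$. The only organizational difference is that the paper first derives the expansion $r_j=\f_j+\sum\e^i w_i^j/i!+\CO(\e^{k+1})$ for all $j$ (its Claim~1) and then separately proves $w_i^j=z_i^j$ (its Claim~2), whereas you fold both steps into a single induction; this is a matter of presentation, not substance.
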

%

\begin{proof}
Fix $j\in\{1,2,\ldots,n\}$, from the continuity of the solution $r_j(\T,\rho,\e)$ and by the compactness of the
set $[\al_{j-1},\al_{j}]\times\overline{D}\times[-\e_0,\e_0]$ it is easy to obtain that
\[
\int_{\al_{j-1}}^{\T} R^j(\T,r_j(\T,\rho,\e),\e)ds= \CO(\e), \quad \T\in [\al_{j-1},\al_{j}].
\]
Thus integrating the differential equation \eqref{star} from $\al_{j-1}$ to $\T$, we get
\begin{equation}
\label{fundlemma}
r_j(\T,\rho,\e) = r_j(\al_{j-1},\rho,\e) + \sum_{i=0}^k \e^i \int_{\al_{j-1}}^{\T} F_i^j(\phi,r_j(\phi,\rho,\e)) d\phi + \CO(\e^{k+1}).
\end{equation}
Note that in the above expression the value of the initial condition  $r_j(\al_{j-1},\rho,\e)$ is not substituted yet.

\smallskip

In the sequel we shall expand the right hand side of the above equality in Taylor series in $\e$ around $\e=0$. To do that we first recall the Fa\'{a} di Bruno's Formula about the $l$-th derivative of a composite
function. Let $g$ and $h$ be sufficiently smooth
functions then
\[
\dfrac{d^l}{d\al^l}g(h(\al))=
\sum_{S_l} \dfrac{l!}{b_1!\,b_2!2!^{b_2}\cdots
b_l!l!^{b_l}}g^{(L)} (h(\al))\prod_{j=1}^l\left(h^{(j)}(\al)\right)^{b_j},
\]
where $S_l$ is the set of all $l$-tuples of non-negative integers
$(b_1,b_2,\cdots,b_l)$ satisfying $b_1+2b_2+\cdots+lb_l=l$, and
$L=b_1+b_2+\cdots+b_l$. So expanding $F_i^j(\phi,r_j(\phi,\rho,\e))$  in Taylor series in $\e$ around $\e=0$ we obtain
\begin{equation}
\label{s7}
\begin{array}{RL}
F_i^j(\phi,r_j(\phi,\rho,\e))=&F_i^j(\phi,r_j(\phi,\rho,0)) \\
&+ \sum_{l=1}^{k-i} \frac{\e^l}{l!} \left(\frac{\partial^l}{\partial \e^l} F_i^j(\phi,r_j(\phi,\rho,\e))\right) \Big|_{\e=0} + \CO(\e^{k-i+1}).
\end{array}
\end{equation}
From the Fa\'{a} di Bruno's Formula we compute
\begin{equation}
\label{s8}
\begin{array}{RL}
\frac{\partial^l}{\partial \e^l} F_i^j(\phi,r_j(\phi,\rho,\e)) \Big|_{\e=0} = &\sum_{S_l}\dfrac{l!}{b_1!\,b_2!2!^{b_2}\cdots
b_l!l!^{b_l}}\\
&\cdot \partial ^L F_i^j(\phi,\f_j(\phi,\rho))\prod_{m=1}^l w_m^j(\phi,\rho)^{b_m},
\end{array}
\end{equation}
where
\[
w_m^j(\phi,\rho)=\frac{\p^m}{\p \e^m}r_j(\phi,\rho,\e)\Big|_{\e=0}.
\]
Substituting \eqref{s8} in \eqref{s7} we have

\begin{equation}
\label{s10}
\begin{array}{RL}
F_i^j(\phi,r_j(\phi,\rho,\e))  =&  F_i^j(\phi,\f_j(\phi,\rho))\\
& + \sum_{l=1}^{k-i}  \sum_{S_l}\dfrac{\e^l}{b_1!\,b_2!2!^{b_2}\cdots
b_l!l!^{b_l}} \partial ^L F_i^j(\phi,\f_j(\phi,\rho)) \prod_{m=1}^l w_m^j(\phi,\rho)^{b_m},
\end{array}
\end{equation}
for $i=0,1,...,k-1$. Moreover for $i=k$ we have that

\begin{equation}
\label{s11}
F_k^j(\phi,r_j(\phi,\rho,\e))= F_k^j(\phi,\f_j(\phi,\rho)) + \CO(\e).
\end{equation}

Substituting \eqref{s10} and \eqref{s11} in \eqref{fundlemma} we get
\begin{equation}
\label{delta}
\begin{array}{RL}
r_j(\T,\rho,\e)=
&r_{j}(\al_{j-1},\rho,\e) + \int_{\al_{j-1}}^\T\Bigg( \sum_{i=0}^k \e^i F_i^j(\phi,\f_j(\phi,\rho)) d\phi\\
&+\sum_{i=0}^{k-1} \sum_{l=1}^{k-i} \e^{l+i} \sum_{S_l}\dfrac{1}{b_1!\,b_2!2!^{b_2}\cdots
b_l!l!^{b_l}}\\
&\cdot \partial ^L F_i^j(\phi,\f_j(\phi,\rho)) \prod_{m=1}^l w_m^j(\phi,\rho)^{b_m}\Bigg)d\phi+ \CO(\e^{k+1}).
\end{array}
\end{equation}
Denote
\[
Q_j(\phi,\rho,\e)=\sum_{i=0}^{k-1} \sum_{l=1}^{k-i} \e^{l+i} \sum_{S_l}\dfrac{1}{b_1!\,b_2!2!^{b_2}\cdots
b_l!l!^{b_l}} \partial ^L F_i^j(\phi,\f_j(\phi,\rho)) \prod_{m=1}^l w_m^j(\phi,\rho)^{b_m}.
\]
After some transformations of the indexes $i$ and $l$ we obtain
\begin{equation}
\label{qj}
Q_j(\phi,\rho,\e)=\sum_{i=1}^{k} \e^{i} \sum_{l=1}^{i} \sum_{S_l}\dfrac{1}{b_1!\,b_2!2!^{b_2}\cdots
b_l!l!^{b_l}} \partial ^L F_{i-l}^j(\phi,\f_j(\phi,\rho)) \prod_{m=1}^l w_m^j(\phi,\rho)^{b_m}.
\end{equation}
Therefore from \eqref{delta} and \eqref{qj} we have
\begin{equation}
\label{rjw}
r_j(\T,\rho,\e)=r_{j}(\al_{j-1},\rho,\e) + \sum_{i=0}^{k}\e^iI_i^j(\T,\rho) + \CO(\e^{k+1}),
\end{equation}
where for $i=0,\ldots,k$ and $j=1,2,\ldots,n$ we are taking
\begin{equation}\label{I0i}
\begin{array}{RL}
I_0^j(\T,\rho)=&\int_{\al_{j-1}}^\T F_0^j(\phi,\f_j(\phi,\rho))d\phi,\quad j=1,2,\ldots\vspace{0.3cm}\\

I_i^j(\T,\rho)=&\int_{\al_{j-1}}^\T\bigg( F_i^j(\phi,\f_j(\phi,\rho)) +\sum_{l=1}^{i}\sum_{S_l}\dfrac{1}{b_1!\,b_2!2!^{b_2}\cdots b_l!l!^{b_l}} \vspace{0.2cm}\\
&\cdot\partial ^L F_{i-l}^j(\phi,\f_j(\phi,\rho)) \prod_{m=1}^l w_m^j(\phi,\rho)^{b_m}\bigg) d\phi, \quad i,j=1,2,\ldots
\end{array}
\end{equation}
Note that for $i=1,\ldots,k$ and $j=2,\ldots,n$ the following recurrence holds
\begin{equation}
\label{r1}
\begin{array}{RL}
w_i^j(\T,\rho)=&\dfrac{\partial^i }{\partial \e^i}r_j(\T,\rho,\e) \Big|_{\e=0}\vspace{0.2cm}\\
=&\frac{\partial^i }{\partial \e^i}r_{j-1}(\al_{j-1},\rho,\e)\Big|_{\e=0} + i! I_i^j(\T,\rho)\vspace{0.2cm}\\
=&w_i^{j-1}(\al_{j-1},\rho) + i!I_i^j(\T,\rho),
\end{array}
\end{equation}
with the initial condition
\begin{equation}\label{r2initial}
w_i^1(\T,\rho)=\dfrac{\partial^i r_1}{\partial \e^i}(\T,\rho,\e) \Big|_{\e=0}
=\dfrac{\partial^i}{\partial \e^i}\left(\rho + \sum_{q=0}^k \e^q I_q^1(\T,\rho) \right)\Bigg|_{\e=0}
=i! I_i^1(\T,\rho).
\end{equation}
Putting \eqref{r1} and \eqref{r2initial} together we obtain
\begin{equation*}
w_i^j(\T,\rho)=i! \left(I_i^1(\al_1,\rho) + I_i^2(\al_2,\rho) + \cdots + I_i^{j-1}(\al_{j-1},\rho)+I_i^j(\T,\rho)\right).
\end{equation*}
for $i=1,2,\ldots,k$ and $j=1,2,\ldots,n.$

\begin{claim}
\label{claim1}
For $j=1,2,\ldots,n$ we have
\[
r_j(\T,\rho,\e)=\f_j(\T,\rho)+\sum_{i=1}^k \frac{\e^i}{i!} w_i^j(\T,\rho) + \CO(\e^{k+1}).
\]
\end{claim}

This claim will be proved by induction on $j$. Let $j=1$. Since $\varphi_1$ is the solution of \eqref{star} for $\e=0$ and $j=1$ with the initial condition $\f_{1}(0,\rho)=\rho$ we get
\begin{equation*}
\label{f0}
\f_1(\T,\rho)=\rho+\int_0^{\T} F_0^1(\T,\f_1(\phi,\rho)) d\phi.
\end{equation*}
Hence from \eqref{rjw}, \eqref{initial} and \eqref{r2initial} it follows that
\begin{equation*}
\begin{split}
r_1(\T,\rho,\e)&=\rho+\sum_{i=0}^k \e^i I_i^1(\T,\rho) + \CO(\e^{k+1})\\
&=\rho+\int_0^{\T} F_0^1(\T,\f_1(\phi,\rho)) d\phi +\sum_{i=1}^k \frac{\e^i}{i!} w_i^1(\T,\rho) + \CO(\e^{k+1})\\
&=\f_1(\T,\rho) + \sum_{i=1}^k \frac{\e^i}{i!} w_i^1(\T,\rho) + \CO(\e^{k+1}).
\end{split}
\end{equation*}
Therefore the claim is proved for $j=1$.

Now using induction we shall prove the claim for $j=j_0$ assuming that it holds for $j=j_0-1,$ that is
\begin{equation}\label{indhyp}
r_{j_0-1}(\T,\rho,\e)=\f_{j_0-1}(\T,\rho)+\sum_{i=1}^k \frac{\e^i}{i!} w_i^{j_0-1}(\T,\rho) + \CO(\e^{k+1}).
\end{equation}

Since $\varphi_{j_0}$ is the solution of \eqref{star} for $\e=0$ and $j=j_0$ with the initial condition $\f_{j_0}(\al_{j_0-1},\rho)=\f_{j_0-1}(\al_{j_0-1},\rho)$ we get
\begin{equation}
\label{fjj}
\f_{j_0}(\T,\rho)=\f_{j_0-1}(\al_{j_0-1},\rho) + \int_{\al_{j_0-1}}^{\T} F_0^1(\T,\f_j(\phi,\rho))d\phi=\f_{j_0-1}(\al_{j_0-1},\rho)+I_0^{j_0}(\T,\rho).
\end{equation}
From \eqref{rjw}, \eqref{initial} and \eqref{r1} we have
\begin{equation*}
\begin{split}
r_{j_0}(\T,\rho,\e)&=r_{{j_0}-1}(\al_{{j_0}-1},\rho,\e) + \sum_{i=0}^k \e^i I_i^{j_0}(\T,\rho) + \CO(\e^{k+1})\\
&=r_{{j_0}-1}(\al_{{j_0}-1},\rho,\e) + I_0^{j_0}(\T,\rho)+\sum_{i=1}^k \e^i \frac{w_i^{j_0}(\T,\rho)-w_i^{j_0-1}(\al_{j-1},\rho)}{i!}+ \CO(\e^{k+1}).\\
\end{split}
\end{equation*}
Finally using \eqref{indhyp} and \eqref{fjj} the above expression becomes
\begin{equation*}
\begin{array}{RL}
r_{j_0}(\T,\rho,\e)=&\f_{j_0-1}(\al_{j_0-1},\rho)  + I_0^{j_0}(\T,\rho)+ \sum_{i=1}^k \frac{\e^i}{i!} w_i^{{j_0}-1}(\al_{j_0-1},\rho)\\
&+ \sum_{i=1}^k \frac{\e^i}{i!} (w_i^{j_0}(\T,\rho)-w_i^{{j_0}-1}(\al_{j_0-1},\rho))+\CO(\e^{k+1})\\
=&\f_{j_0}(\T,\rho)+ \sum_{i=1}^k \frac{\e^i}{i!} w_i^{j_0}(\T,\rho) + \CO(\e^{k+1}).\\
\end{array}
\end{equation*}
This proves the Claim \ref{claim1}.

\smallskip

The proof of Lemma \ref{lemma} ends by proving the following claim.
\begin{claim}
The equality  $w_i^j=z_i^j$ holds for $i=1,2,\ldots,k$ and $j=1,2,\ldots,n.$
\end{claim}

Computing the derivative in the variable $\T$ of the expressions \eqref{z11} and \eqref{r2initial}, for $i=j=1$, we see, respectively,  that the functions $z_1^1(\T,\rho)$ and $w_1^1(\T,\rho)$  satisfy the same differential equation. Moreover for each $i=1,2,\ldots,k$ the integral equations \eqref{z11} and \eqref{r1} (and the equivalent differential equations), which provides respectively  $z_i^j$ and $w_i^j$, are defined by the same recurrence for $j=2,\ldots,n$. Therefore we conclude that the functions  $z_i^j(\T,\rho)$ and $w_i^j(\T,\rho)$ satisfy the same differential equations for  $i=1,2,\ldots,k$ and $j=1,2,\ldots,n$.

\smallskip

It remains to prove that their initial conditions coincide. Let $i\in\{1,2,\ldots,k\}$. For $j=1$ we have from \eqref{r2initial} and \eqref{z11} that $w_i^1(0,\rho)=0=z_i^1(0,\rho)$. For $j=2,\ldots,n$  the initial conditions are defined by the recurrence $z_i^j(\al_{j-1},\rho)=z_i^{j-1}(\al_{j-1},\rho)$ (see \eqref{z11}), which is the same recurrence for the initial conditions of $w_i^j(\al_{j-1},\rho)$. Indeed from \eqref{r1} and \eqref{I0i} we see that for $j=2,\ldots,n$ we have $w_i^j(\al_{j-1},\rho)=w_i^{j-1}(\al_{j-1},\rho)+i!I_i^j(\al_{j-1},\rho)=w_i^{j-1}(\al_{j-1},\rho)$. Therefore $z_i^{j}(\al_{j-1},\rho)=w_i^{j}(\al_{j-1},\rho)$ for every $i=1,2,\ldots,k$ and $j=1,2,\ldots,n$.

\smallskip

Hence Claim 2 follows from the uniqueness property of the solutions of the differential equations.
\end{proof}

Now we are ready to prove Theorem \ref{maintheorem}.

\begin{proof}[Proof of Theorem \ref{maintheorem}]
Since $\f(\T,\rho)$ is $2\pi$-periodic, using Lemma \ref{lemma} we have
\begin{equation*}
\begin{split}
r_n(2\pi,\rho,\e)&= \f_n(2\pi,\rho)+\sum_{i=1}^k \frac{\e^i}{i!} z_i^n(2\pi,\rho) + \CO(\e^{k+1})\\
&=\rho+\sum_{i=1}^k \frac{\e^i}{i!} z_i^n(2\pi,\rho) + \CO(\e^{k+1}).
\end{split}
\end{equation*}
Therefore from \eqref{fpromediada} the following equality holds
\begin{equation}
\label{final}
r_n(2\pi,\rho,\e) = \rho + \e f_1(\rho)+\e^2 f_2(\rho)+\cdots+\e^k f_k(\rho) + \CO(\e^{k+1}).
\end{equation}

Consider the displacement function
\begin{equation*}
f(\rho,\e)=r(2\pi,\rho,\e)-\rho=r_n(2\pi,\rho,\e)-\rho.
\end{equation*}
Clearly for some $\e=\bar{\e} \in (-\e_0,\e_0)$ discontinuous differential equation \eqref{s1} admits a periodic solution passing through $\bar{\rho} \in D$ if and only if $f(\bar{\rho},\bar{\e})=0$. From \eqref{final} we have that
\[
f(\rho,\e)=\sum_{i=1}^k \e^i f_i(\rho) + \CO(\e^{k+1}).
\]

By hypotheses $f_l(\rho^*)=0$ and $f_l'(\rho^*)\neq0$. Using the Implicit Function Theorem for the function $\mathcal{F}(\rho,\e)=f(\rho,\e)/\e^l$ we guarantee the existence of a differentiable function $\rho(\e)$ such that $\rho(0)=\rho^*$ and $f(\rho(\e),\e)=0$ for every $|\e|\neq 0$ sufficiently small. This completes the proof of Theorem \ref{maintheorem}.
\end{proof}

\section{Examples}\label{examples}

In this section we present three applications of our main result (Theorem \ref{maintheorem}). In the first two examples (subsections 4.1 and 4.2) we use the averaged functions \eqref{fpromediada} up to order $7$ to provide lower bounds for the maximum number of limit cycles admitted by some piecewise linear systems with four zones. The first system is a piecewise linear perturbation of the linear center $(\dot x,\dot y)=(-y,x)$, and the second one is a piecewise linear perturbation of a  discontinuous piecewise constant center.  As usual, the expressions of the higher order averaged functions are extensive (see \cite{LliItiNov2015,LliNovTeiN2014}), so we shall omit them here. We emphasize that our goal in these first two examples, by taking particular classes of perturbations, is to illustrate the using of the higher order averaged functions.

\smallskip

In the third example we study the quadratic isochronous center $(\dot x,\dot y)=(-y+x^2,x+ xy)$ perturbed inside a particular family of piecewise quadratic system with $n$ zones. Using the first order averaged function \eqref{fpromediada} we provide lower bounds, depending on $n$, for the maximum number of limit cycles admitted by this system. We emphasize that our goal in this last example, again by taking a particular class of perturbation, is to illustrate the using of Theorem \ref{maintheorem} to study discontinuous piecewise nonlinear system with many zones.

\smallskip

The next proposition, proved in \cite{Gasull}, is needed to deal with our examples.

\begin{proposition}
\label{functionsli}Consider $n$ linearly independent functions $h_i:I \to \R$, $i=1,2,\ldots,n$.
\begin{itemize}
\item [(i)] Given $n-1$ arbitraries values of $a_i \in I$, $i=1,2,\ldots,n-1$ there exist $n$ constants $\beta_k$, $i=1,2,\ldots,n$ such that
\begin{equation}
\label{gazulproposition}
h(x) \doteq \sum_{k=1}^n \beta_kh_k(x),
\end{equation} is not the zero function and $h(a_i)=0$ for $i=1,2,\ldots,n-1$.
\item[(ii)] Furthermore, if all $h_i$ are analytical functions on $I$ and there exists $j \in \{1,2,\ldots,n\}$ such that $h_j |_I$ has constant sign, it is possible to get an $h$ given by \eqref{gazulproposition}, such that it has at least $n-1$ simple zeroes in $I$.
\end{itemize}
\end{proposition}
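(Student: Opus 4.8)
The plan is to handle the two items separately: (i) is elementary linear algebra, while (ii) rests on a perturbation argument that exploits analyticity together with the constant-sign hypothesis. For (i): imposing $h(a_i)=0$, $i=1,\dots,n-1$, on $h=\sum_{k=1}^{n}\beta_k h_k$ is exactly the homogeneous linear system $\sum_{k=1}^{n}\beta_k h_k(a_i)=0$, $i=1,\dots,n-1$, in the $n$ unknowns $\beta_1,\dots,\beta_n$. Having more unknowns than equations it admits a solution $(\beta_1,\dots,\beta_n)\neq 0$, and since $h_1,\dots,h_n$ are linearly independent the corresponding $h$ is not the zero function. That finishes (i).

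For (ii), first I would pick $n-1$ distinct points $a_1<\cdots<a_{n-1}$ in the interior of $I$ and apply (i) to obtain $h$ in the span, $h\not\equiv 0$, with $h(a_i)=0$ for all $i$; fix a compact $[\alpha,\beta]\subset\mathrm{int}\,I$ containing all the $a_i$. Since $h_j$ has constant sign it does not vanish on $I$, so $\psi:=h/h_j$ is analytic on $I$ and non-constant (it vanishes at $a_1$, whereas $h_j$ does not); hence $\psi'\not\equiv 0$ and $\psi$ has only finitely many critical points, thus finitely many critical values, on $[\alpha,\beta]$. For a parameter $c$ set $\widetilde h_c:=h-c\,h_j=h_j(\psi-c)$; then $\widetilde h_c$ lies in the span and $\widetilde h_c\not\equiv 0$ (it cannot be a multiple of $h_j$, as $h(a_1)=0\neq h_j(a_1)$), its zeros are exactly those of $\psi-c$, and when $c$ is a regular value of $\psi|_{[\alpha,\beta]}$ every such zero lying in $[\alpha,\beta]$ is simple.

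It remains to force at least $n-1$ of them. The function $\psi$ has at least $n-1$ zeros in $(\alpha,\beta)$, namely the $a_i$; I would classify them into $P$ zeros at which $\psi$ changes sign and $Q$ zeros at which it does not, splitting $Q=Q_++Q_-$ according to the sign of the leading coefficient of $\psi$ there (so $\psi\sim e(x-a)^{2m}$ with $e$ of that sign). For $c>0$ small, near each of the $Q_+$ tangencies with $e>0$ the equation $\psi=c$ has exactly two (simple) solutions, near each sign-change it has exactly one, and near the $Q_-$ tangencies it has none; for $c<0$ the roles of $Q_+$ and $Q_-$ swap. Choosing the sign of $c$ so that $\max(Q_+,Q_-)$ is the one that contributes, and then shrinking $|c|$ so that $c$ is a regular value of $\psi|_{[\alpha,\beta]}$ and the created solutions all lie in $(\alpha,\beta)$, yields $\widetilde h_c\not\equiv 0$ in the span with at least $2\max(Q_+,Q_-)+P\ge Q+P\ge n-1$ simple zeros in $\mathrm{int}\,I$, which is (ii). For $D\subset\R^N$ the same scheme works, replacing the scalar counting step by the requirement that the relevant Jacobian be nonsingular.

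The step I expect to be the main obstacle is exactly this last one: the $h$ produced by (i) may have high-order tangential zeros at the chosen points, so ``$n-1$ zeros'' is not yet ``$n-1$ simple zeros''. The cure is the regular-value perturbation along the non-vanishing direction $h_j$, and this is where both hypotheses of (ii) are genuinely used --- analyticity, to guarantee only finitely many critical values so that small regular $c$ exist, and the constant sign of $h_j$, both to legitimate the division by $h_j$ and to supply the perturbation direction. One still has to verify the local fact that an even-order tangency of $\psi$ opens, under a perturbation of the correct sign, into exactly two simple zeros, which follows from the Weierstrass preparation theorem (or a direct implicit-function argument).
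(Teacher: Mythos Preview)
The paper does not prove this proposition; it is quoted verbatim from \cite{Gasull} (Coll, Gasull and Prohens) and used as a black box in Section~\ref{examples}. So there is no ``paper's own proof'' to compare against.

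That said, your argument is sound. Part (i) is exactly the expected linear-algebra observation. For part (ii) your perturbation along the non-vanishing direction $h_j$ is the natural idea and the counting $2\max(Q_+,Q_-)+P\ge Q+P\ge n-1$ is correct; the analyticity hypothesis is used precisely where you say, to ensure isolated critical points (hence finitely many critical values on the compact $[\alpha,\beta]$) so that arbitrarily small nonzero regular values $c$ exist, and to guarantee the clean local normal form at each $a_i$. Two minor remarks: your justification that $\psi$ is non-constant should simply say that $\psi(a_1)=0$ while $h\not\equiv 0$ forces $\psi\not\equiv 0$; and the closing sentence about $D\subset\R^N$ and a ``nonsingular Jacobian'' does not belong here --- that language comes from the remark after Theorem~\ref{maintheorem}, not from this proposition, and should be deleted.
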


\smallskip
\subsection{Nonsmooth perturbation of the linear center}\label{example1}
The bifurcation of limit cycles from smooth and nonsmooth perturbations of the linear center $(\dot x,\dot y)=(-y,x)$ is a fairly studied problem in the literature, see for instance \cite{BPT,CT,GLN,LT,N}. Here we apply our main result (Theorem \ref{maintheorem}) to study these limit cycles when the linear center  is perturbed inside a particular of piecewise linear system with $4$ zones. Following the notation introduced in subsection 1.2 we take
\begin{equation}
\label{linearcenter1}
\begin{array}{l}
X_0^j(x,y)=\big(-y, x\big), \,\, \text{for}\,\, j=1,\ldots,n,\,\, \text{and}\vspace{0.1cm}\\
X_i^j(x,y)=\big(a_{ij} x + b_{j},0\big),\,\,\text{for}\,\, j=1,\ldots,n,\,\,\text{and}\,\, i=1,\ldots,k.
\end{array}
\end{equation}
with $a_{ij}, b_{ij} \in \R$ for all $i,j$. We consider the discontinuous piecewise differential system $(\dot x,\dot y)^T=Z_{\X,\al}(x,y;\e)$ (see \eqref{planar}) where $\X=\big(X_1,\ldots,X_4)$ (see \eqref{Xj}) and $\al=(\al_0,\al_1,\al_2,\al_3)=(0, \pi/2,  \pi, 3\pi/2).$

\smallskip

First of all, in order to apply our main result (Theorem \ref{maintheorem}) to study the limit cycles of $(\dot x,\dot y)^T=Z_{\X,\al}(x,y;\e)$, we shall write it into the standard form \eqref{s1}. To do that we consider the polar coordinates $x=r \cos \T$, $y=r \sin \T$. So the set of discontinuity becomes $\Sigma=\{\T=0\}\cup\{\T=\al_1\}\cup \{\T=\al_{2}\}\cup \{\T=\al_{3}\}$ and  in each sector $C_j$ (see \eqref{planarsector}), $j=1,2,3,4,$  the differential system $(\dot x,\dot y)^T=Z_{\X,\al}(x,y;\e)$ reads
\begin{equation*}
\label{linearcenterpolarj}
\begin{array}{RL}
\dot r(t)=&\sum_{i=1}^7\e^i(a_{ij}r\cos^2\T + b_{ij}\cos \T),\\
\dot \T(t)=&1-\frac{1}{r}\sum_{i=1}^7\e^i(a_{ij}r\cos \T \sin \T + b_{ij} \sin \T).
\end{array}
\end{equation*}
Note that $\dot \T(t)\neq0$ for $|\e|$ sufficiently small, thus we can take $\T$ as the new independent time variable by doing $r'(\T)=\dot r(t)/\dot \T(t)$. Then
\begin{equation}
\label{ex1polar}
r'(\T)=\dfrac{\dot r(t)}{\dot \T(t)}=\sum_{i=1}^7\e^i F_i^j(\T,r) + \e^{k+1}R^j(\T,r,\e),\quad \text{for}\quad j=1,2,3,4,
\end{equation}
where $F_i^j$ is the coefficient of $\e^i$ in the Taylor series in $\e$ of $\dot r(t)/\dot \T(t)$ around $\e=0$.

\smallskip

From here we shall use the averaged functions $\eqref{fpromediada}$ up to order $7$ to study the isolated periodic solutions of the piecewise differential equation defined by \eqref{ex1polar} or, equivalently, the limit cycles of the piecewise differential system $(\dot x,\dot y)^T=Z_{\X,\al}(x,y;\e)$ defined by \eqref{linearcenter1}. As we have said before, due to the complexity of the expressions of the higher order averaged functions we shall not provided them explicitly.  So we first describe the methodology to obtain lower bounds for the number of their zeros, and consequently for the number of limit cycles of \eqref{linearcenter1}.

\smallskip

Assume that one have computed the list of averaged functions $f_i$, $i=1,\ldots,k,$ and that they are polynomials. The first step is to established a lower bound for the number of zeros that $f_1$ can have. To do that, one can build a vector $M_1$ where each entry $s$ of $M_1$ is given by the coefficient of $r^s$ of the function $f_1$. Clearly $M_1$ is a function on the parameter variable $v_1=\{a_{1j}:\, j=1,\ldots,4\}\cup\{b_{1j}:\, j=1,\ldots,4\}$. So taking the derivative $D_{v_1} M_1$, a lower bound for the number of zeros of $f_i$ will be given by the rank of the matrix $D_{v_1} M_1$ decreased by $1$. For instance, in our first example system \eqref{ex1polar}, the averaged function $f_1$ reads
\begin{equation*}
\label{f1linearcenter}
\begin{array}{RL}
f_1(r)=&\int_{0}^{\frac{\pi}{2}} F_1^1(\T,r) d\T + \int_{\frac{\pi}{2}}^{\pi} F_1^2(\T,r) d\T + \int_{\pi}^{\frac{3\pi}{2}} F_1^3(\T,r) d\T + \int_{\frac{3\pi}{2}}^{2\pi} F_1^4(\T,r) d\T\\
=&\dfrac{\pi}{4}   r (a_{11}+a_{12}+a_{13}+a_{14})+b_{11}-b_{12}-b_{13}+b_{14}.
\end{array}
\end{equation*}
Clearly $f_1$ has at most one positive root and there exists parameters $a_{1j}$'s and $b_{1j}$'s  for which this zero exists. In this case
\[
\begin{array}{c}
M_1=\Big(b_{11}-b_{12}-b_{13}+b_{14}\,,\,\dfrac{1}{4} \pi  (a_{11}+a_{12}+a_{13}+a_{14})\Big),\vspace{0.2cm}\\

D_{v_1}M_1=\left(
     \begin{array}{cccccccc}
       0 & 0 & 0 & 0 & 1 & -1 & -1 & 1\\
       \dfrac{\pi}{4}  & \dfrac{\pi}{4}  & \dfrac{\pi}{4}  & \dfrac{\pi}{4}  & 0 & 0 & 0 & 0
     \end{array}
   \right).
   \end{array}
   \]
We note that the matrix $D_{v_1}M_1$ has maximum rank $2$. Applying Theorem \ref{maintheorem} for $l=1$ we obtain at least one limit cycle for the differential system \eqref{linearcenter1}.

The next step is to chose parameters to assure that $f_1(r) \equiv 0$. In our example $a_{11}=-(a_{12}+a_{13}+a_{14})$ and $b_{11}=b_{12}+b_{13}-b_{14}$. To continue the analysis we repeat the above procedure: build a vector $M_2$ where each entry $s$ of $M_2$ is given by the coefficient of $r^s$ of the function $f_2$; define the parameter vector  $v_2=\{a_{1j}:\, i=1,2,\, j=1,\ldots,4\}\cup\{b_{1j}:\, i=1,2,\,j=1,\ldots,4\}$; and take the derivative $D_{v_2} M_2$. Again a lower bound for the number of zeros of $f_2$ is given by the rank of the matrix $D_{v_2} M_2$ decreased by $1$. In our example
\begin{equation*}
\label{f2linearcenter}
\begin{array}{RL}
f_2(r)=&r^2\left[\pi  (a_{21}+a_{22}+a_{23}+a_{24})+2 (a_{12}+a_{13}) (a_{13}+a_{14})\right]\\
&+r\left[\pi(a_{12}+a_{13})(b_{13}-b_{14})-4(a_{14}b_{12}+(a_{12}+a_{14})b_{13}\right.\\
&\left.+a_{13}(b_{12}+2b_{13}-b_{14})-a_{12}b_{14}-b_{21}+b_{22}+b_{23}-b_{24})\right]\\
&+4(b_{12}+b_{13})(b_{13}-b_{14}).
\end{array}
\end{equation*}
The function $f_2$ is a polynomial of degree $2$ in $r$. It is easy to see that the matrix $D_{v_2}M_2$ has maximum rank again, that is $3$. Applying Theorem \ref{maintheorem} for $l=2$ we obtain at least two limit cycles for the differential system \eqref{linearcenter1}.

\smallskip

In general, after estimating a lower bound for the number of zeros of $f_{l-1}$ we chose parameters to assure that $f_{l-1}(r) \equiv 0$. Then we follow the above steps: build a vector $M_l$ where each entry $s$ of $M_l$ is given by the coefficient of $r^s$ of the function $f_l$; define the parameter vector $v_l=\{a_{ij}:\, i=1,\ldots l\,, j=1,\ldots,4\}\cup\{b_{ij}:\, i=1,\ldots l\,, j=1,\ldots,4\}$; and take the derivative $D_{v_l} M_l$. As above a lower bound for the number of zeros of $f_l$ will be given by the rank of the matrix $D_{v_l} M_l$ decreased by $1$.

\smallskip

In what follows, using the procedure described above, we provide a table showing the lower bound $N(l)$, $l=1,\ldots,7,$ for the maximum number of limit cycles of the piecewise differential system $(\dot x,\dot y)^T=Z_{\X,\al}(x,y;\e)$, defined by \eqref{linearcenter1}, obtained by studying the averaged function of order $l$.
\begin{center}
\begin{tabular}{|c|c|c|c|c|c|c|c|}
  \hline
  $l$ & $1$ & $2$ & $3$ & $4$ & $5$ & $6$ & $7$ \\
  \hline
  $N(l)$ & $1$ & $2$ & $2$ & $3$ & $3$ & $3$ & $3$ \\
  \hline
\end{tabular}
\end{center}

\subsection{Nonsmooth perturbation of a piecewise constant center}\label{example2}
Consider the discontinuous piecewise constant differential system
\begin{equation}
\label{constantcenter}
(\dot x,\dot y)^T=X(x,y)=
\begin{cases}
X_1(x,y)& \text{if $x>0$ and $y>0$},\\
X_2(x,y)& \text{if $x<0$ and $y>0$},\\
X_3(x,y)& \text{if $x<0$ and $y<0$},\\
X_4(x,y)& \text{if $x>0$ and $y<0$},
\end{cases}
\end{equation}
where
\begin{equation*}
X_1(x,y)=
\begin{cases}
-1 + \sum_{i=1}^7 \e^i(a_{i1} x + b_{i1}),\\
\,\,\,\,1,\\
\end{cases}\quad
X_2(x,y)=
\begin{cases}
-1 + \sum_{i=1}^7 \e^i(a_{i2} x + b_{i2}),\\
-1,\\
\end{cases}
\end{equation*}

\begin{equation*}
X_3(x,y)=
\begin{cases}
\,\,\,\,1 + \sum_{i=1}^7 \e^i(a_{i3} x + b_{i3}),\\
-1,\\
\end{cases}\quad
X_4(x,y)=
\begin{cases}
1 + \sum_{i=1}^7 \e^i(a_{i4} x + b_{i4}),\\
1,\\
\end{cases}
\end{equation*}
with $a_{ij}, b_{ij} \in \R$ for all $i \in \{1,2,\ldots,7\}$ and $j \in \{1,2,3,4\}$.

\smallskip

First of all, in order to apply our main result (Theorem \ref{maintheorem}) to study the limit cycles of the differential system \eqref{constantcenter}, we shall write it into the standard form \eqref{s1}. Again, to do that we consider polar coordinates $x=r \cos \T$, $y=r \sin \T$. So the set of discontinuity becomes $\Sigma=\{\T=0\}\cup\{\T=\al_1\}\cup \{\T=\al_{2}\}\cup \{\T=\al_{3}\}, $ with $\al_0=0, \al_1=\pi/2,  \al_2=\pi, \al_3=3\pi/2,$ and $\al_4=2\pi$, and  for each $j=1,2,3,4$ the differential system $(\dot x,\dot y)=X_j(x,y)$ reads
\begin{equation*}
\label{linearcenterpolarj2}
\begin{split}
&\dot r(t)=g_j(\T) + \sum_{i=1}^7\e^i(a_{ij}r\cos^2\T + b_{ij}\cos \T),\\
&\dot \T(t)=\frac{1}{r}\left(\widehat{g}_j(\T) - \sum_{i=1}^7\e^i(a_{ij}r\cos \T \sin \T + b_{ij} \sin \T)\right),
\end{split}
\end{equation*}
where
\begin{equation*}
\begin{array}{RRL}
&g_1(\T)=\sin \T - \cos \T \quad \quad \quad &\widehat{g}_1(\T)=\sin \T + \cos \T,\\
&g_2(\T)=-(\sin \T + \cos \T) \quad \quad \quad &\widehat{g}_2(\T)=\sin \T - \cos \T,\\
&g_3(\T)=-\sin \T + \cos \T \quad \quad \quad &\widehat{g}_3(\T)=-(\sin \T + \cos \T),\\
&g_4(\T)=\sin \T + \cos \T \quad \quad \quad &\widehat{g}_4(\T)=-\sin \T + \cos \T.
\end{array}
\end{equation*}
Note that for each $j=1,2,3,4$ and $\al_{j-1}\leq\T\leq\al_j$, we have that $\dot \T(t)\neq0$ for $|\e|$ sufficiently small, thus we can take $\T$ as the new independent time variable by doing $r'(\T)=\dot r(t)/\dot \T(t)$. Then
\begin{equation}
\label{ex3polar}
r'(\T)=\dfrac{\dot r(t)}{\dot \T(t)}=\sum_{i=0}^7\e^i F_i^j(\T,r) + \e^{k+1}R^j(\T,r,\e),
\end{equation}
where $F_i^j$ is the coefficient related to $\e^i$ in Taylor series in $\e$ of $\dot r(t)/\dot \T(t)$ around $\e=0$.

\smallskip

From here we shall use the averaged functions $\eqref{fpromediada}$ up to order $7$ to study the isolated periodic solutions of the piecewise differential equation defined by \eqref{ex3polar} or, equivalently, the limit cycles of the piecewise differential system  \eqref{constantcenter}. Following the same methodology described in subsection \ref{example1}, we provide a table showing the lower bound $N(l)$, $l=1,\ldots,7,$ for the maximum number of limit cycles of \eqref{constantcenter} obtained by studying the averaged function of order $l$.

\begin{center}
\begin{tabular}{|c|c|c|c|c|c|c|c|}
  \hline
  $l$ & $1$ & $2$ & $3$ & $4$ & $5$ & $6$ & $7$ \\
  \hline
  $N(k)$ & $1$ & $2$ & $2$ & $2$ & $2$ & $2$ & $2$ \\
  \hline
\end{tabular}
\end{center}

\subsection{Nonsmooth perturbation of an isochronous quadratic center}
In this section we consider the quadratic isochronous center $(\dot x,\dot y)=(-y+x^2,x+ xy)$ perturbed inside a class of piecewise quadratic system with $n$ zones. Following the notation introduced in subsection 1.2 we take
\begin{equation*}\label{quadraticcenter1}
\begin{array}{l}
X_0^1(x,y)=\big(-y+x^2, x+xy\big), \,\, \text{for}\,\, j=1,\ldots,n,\,\, \text{and}\vspace{0.1cm}\\
 X_i^1(x,y)=\big(a_{j} x^2 + b_{j}x + c_{j},0\big),\,\,\text{for}\,\, j=1,\ldots,n,
\end{array}
\end{equation*}
where $a_{j}, b_{j}$ and $c_{j}$ are real numbers for all $j \in \{1,2,\ldots,n\}$. We consider the discontinuous piecewise differential system $(\dot x,\dot y)^T=Z_{\X,\al}(x,y;\e)$ (see \eqref{planar}) where $\X=\big(X_1,\ldots,X_n)$ (see \eqref{Xj}) and $\al=(\al_j)_{j=0}^{n-1}=(2j\pi/n)_{j=0}^{n-1}.$

\smallskip

As before, in order to apply our main result (Theorem \ref{maintheorem}) to study the limit cycles of $(\dot x,\dot y)^T=Z_{\X,\al}(x,y;\e)$, we shall write it into the standard form \eqref{s1}. To do that we consider a first change of coordinates $x=-u/(v-1)$, $y=-v/(v-1)$ (see \cite{CS}). Note that this change keeps fixed all straight lines passing through the origin and therefore does not change the set of discontinuity. In each sector $C_j$ (see \eqref{planarsector}), $j=1,2,3,4,$  the differential system $(\dot x,\dot y)^T=Z_{\X,\al}(x,y;\e)$ reads
\begin{equation}
\label{quadraticcenter2}
\begin{array}{RL}
\dot u =& -v + \e \Bigg(u \bigg(b_{j}-\frac{a_{j}}{v-1}u \bigg)+c_{j}(1-v)\Bigg),\\
\dot v= &u.
\end{array}
\end{equation}
Now, as a second change of variables, we consider the polar coordinates $u=r\cos\T$ and $v=r\sin\T.$ Taking $\T$ as the new independent time variable by doing $r'(\T)=\dot r(t)/\dot \T(t)$, system \eqref{quadraticcenter2} becomes
\[
r'(\T)=\e F^j(\T,r) + \CO(\e^2),
\]
where
\begin{equation*}
\begin{array}{RL}
F^j(\T,r)=&\cos \T \Bigg(c_{j} +r \bigg(-c_{j} \sin \T+\cos \T \bigg(b_{j}+\frac{a_{j} r \cos \T}{1-r \sin \T}\bigg)\bigg)\Bigg).
\end{array}
\end{equation*}
for $j=1,\ldots,n$.

\smallskip

In this new coordinates the piecewise differential system $(\dot x,\dot y)^T=Z_{\X,\al}(x,y;\e)$ reads
\begin{equation}
\label{centroquadraticonzonas}
r'(\T)=\e F(\T,r) + \CO(\e^2),
\end{equation}
 where
\begin{equation*}
F(\T,r)=\sum_{j=1}^{n} \chi_{[\frac{2(j-1)\pi}{n},\frac{2j\pi}{n}]}(\T) F^j(\T,r).
\end{equation*}
Computing the first order averaged function $f_1$ of \eqref{centroquadraticonzonas} we obtain

\begin{equation*}
\label{averagefunctionnzones}
\begin{array}{RL}
f_1(r)=&\sum_{j=1}^{n} \int_{\frac{2(j-1)\pi}{n}}^{\frac{2j\pi}{n}} F^j(\T,r) d\T\\
=&\frac{1}{4}\Bigg[\left(\sum_{j=1}^{n} 4(a_j+cj)\left(\sin \left(\frac{2j\pi}{n}\right)-\sin\left(\frac{2(j-1)\pi}{n}\right)\right)\right)\\
&+r \Bigg(\sum_{j=1}^{n} \left(\frac{4\pi }{n}+\sin\left( \frac{4j\pi }{n}\right)-\sin \left(\frac{4(j-1)\pi}{n}\right)\right)b_j\\
&+(a_j-c_j)\left(\cos\left(\frac{4(j-1)\pi}{n}\right)- \cos\left(\frac{4j\pi}{n}\right)\right)\Bigg)\\
&+\dfrac{(r^2-1)}{r} \left(\sum_{j=1}^{n} 4a_j \ln \left(1-r \sin \left(\frac{2(j-1)\pi}{n}\right)\right)\right)\\
&+\dfrac{(r^2-1)}{r} \left(\sum_{j=1}^{n} -4a_j \ln \left(1-r \sin\left( \frac{2j\pi}{n}\right)\right)\right)\Bigg].
\end{array}
\end{equation*}
Since  $\sin \left(\frac{2(j-1)\pi}{n}\right)=0$ for $j=1,$ and $\sin \left(\frac{2j\pi}{n}\right)=0$ for $j=n$, the above expression simplifies as

\begin{equation*}
\label{averagefunctionnzones2}
\begin{array}{RL}
f_1(r)=&\frac{1}{4}\Bigg[\left(\sum_{j=1}^{n} 4(a_j+cj)\left(\sin \left(\frac{2j\pi}{n}\right)-\sin\left(\frac{2(j-1)\pi}{n}\right)\right)\right)\\
&+r \Bigg(\sum_{j=1}^{n} \left(\frac{4\pi }{n}+\sin \left(\frac{4j\pi }{n}\right)-\sin \left(\frac{4(j-1)\pi}{n}\right)\right)b_j\\
&+(a_j-c_j)\left(\cos\left(\frac{4(j-1)\pi}{n}\right)- \cos\left(\frac{4j\pi}{n}\right)\right)\Bigg)\\
&+\dfrac{(r^2-1)}{r} \left(\sum_{j=2}^{n} 4(a_j-a_{j-1}) \ln \left(1-r \sin \left(\frac{2(j-1)\pi}{n}\right)\right)\right)\Bigg].
\end{array}
\end{equation*}
Note that $f_1$ is written as a linear combination of $n+1$ functions of the family
\[
\CF=\left\{1,r,h_j(r)\doteq\dfrac{(r^2-1)}{r}\ln \left(1-r \sin \left(\frac{2(j-1)\pi}{n}\right)\right):\, j=2,3,\ldots,n\right\}.
\]
It is easy to see that this combination is linearly independent.

\smallskip

Regarding the functions $h_j$'s we have the following properties
\begin{itemize}
\item[{\bf (1)}]  Let $j\in\{2,3,\ldots,n\}$. Then $h_j(r) \equiv 0$  if and only if $n$ is even and $j=1+n/2$.

\smallskip

\item[{\bf (2)}] Let $j_1,j_2\in\{2,3,\ldots,n\}.$ Then $h_{j_1}(r)\equiv h_{j_2}(r)$  if and only if $n$ is even and
$(j_1+j_2-2)\in\{n/2,3n/2\}$.
\end{itemize}

From the above properties we first conclude that if $n$ is odd then the function $f_1$ is a linearly independent combination of $n+1$ linearly independent functions. From Proposition \ref{functionsli} we can find parameters such that $f_1$ has $n$ simple zeros.

\smallskip

If $n=2$ then $f_1(r)=\pi  (b_1+b_2) r/2$ which has no simple positive zeros. From now on we assume that $n$ is even and greater than $2$.  From property {\bf (1)} we already know that $h_{j_0}\equiv 0$ for $j_0=1+n/2$. From property {\bf (2)} it remains  to analyze how many pairs of integers $(j_1,j_2)$, $2\leq j_1< j_2\leq n,$ satisfy the equations $2(j_1+j_2-2)=n$ and $2(j_1+j_2-2)=3n$.

\smallskip

Let $\ov n$ be a positive integer. If $n=4\ov n$ then both equations $2(j_1+j_2-2)=n$ and $2(j_1+j_2-2)=3n$ have $n/4-1$ solutions. If $n=4\ov n+2$  then both equations $2(j_1+j_2-2)=n$ and $2(j_1+j_2-2)=3n$ have $(n-2)/4$ solutions. Therefore we conclude that:
\begin{itemize}
\item If $n=4\ov n$ then $\#\CF=\dfrac{n}{2}+2$;

\smallskip

\item If $n=4\ov n+2$ then $\#\CF=\dfrac{n}{2}+1$;
\end{itemize}

Denote by $N$ the maximum number of limit cycles of $(\dot x,\dot y)^T=Z_{\X,\al}(x,y;\e)$. Applying Proposition \ref{functionsli} and Theorem \ref{maintheorem} we conclude that:
\begin{itemize}
\item [(i)] If $n$ is odd then $N\geq n$;

\smallskip

\item [(ii)] If $n=2$ then $N\geq 0$ (no information!);

\smallskip

\item[(iii)] If $n=4k$ then $N\geq \dfrac{n}{2}+1$;

\smallskip

\item[(iv)] If $n=4k+2$ then $N\geq \dfrac{n}{2}$.
\end{itemize}

\section*{Acknowledgements}

%

The first author is partially supported by a MINECO grant MTM2013-40998-P and an
AGAUR grant number 2014SGR-568.
The second author is supported by FAPESP grants 2015/02517-6 and 2015/24841-0.
The first and the second authors are supported by the European Community FP7-PEOPLE-2012-IRSES-316338 and FP7-PEOPLE-2012-IRSES-318999 grants.
The third author has been supported by a Ph.D. CAPES grant and by CAPES CSF-PVE-88887.
The three authors are also supported by the joint project CAPES-MECD grant PHB-2009-0025-PC.

\end{document}